\newtheorem{lemma}{Lemma}[section]
\newtheorem{prop}[lemma]{Proposition}
\newtheorem{teo}[lemma]{Theorem}
\theoremstyle{definition}
\newtheorem{oss}[lemma]{Remark}
\newtheorem{defi}[lemma]{Definition}
\newtheorem*{ack}{Acknowledgements}
\author[Brasco]{Lorenzo Brasco}
\author[Santambrogio]{Filippo Santambrogio}
\address[L.\ Brasco]{Dipartimento di Matematica e Informatica
\newline\indent
Universit\`a degli Studi di Ferrara,
44121 Ferrara, Italy}
\address{{\it and } Institut de Math\'ematiques de Marseille
\newline\indent
Aix-Marseille Universit\'e,
Marseille, France}
\email{lorenzo.brasco@unife.it}
\address[F.\ Santambrogio]{Laboratoire de Math\'ematiques d'Orsay 
\newline\indent 
Univ. Paris-Sud, CNRS, Universit\'e Paris-Saclay, 91405 Orsay Cedex, France}
\email{filippo.santambrogio@math.u-psud.fr}
\title[An estimate for the $p-$Laplacian]{A sharp estimate \`a la Calder\'on-Zygmund\\ for the $p-$Laplacian}
\numberwithin{equation}{section}
\keywords{Degenerate quasilinear elliptic equations; regularity of solutions; higher differentiability}
\subjclass[2010]{49N60, 49K20, 35J92}
\begin{document}

\begin{abstract}
We consider local weak solutions of the Poisson equation for the $p-$Laplace operator. We prove a higher differentiability result, under an essentially sharp condition on the right-hand side. The result comes with a local scaling invariant a priori estimate.
\end{abstract}

\maketitle

\begin{center}
\begin{minipage}{11cm}
\small
\tableofcontents
\end{minipage}
\end{center}

\section{Introduction}

\subsection{The problem}
In this paper we are concerned with local or global $W^{1,p}$ solutions to the Poisson equation for the $p-$Laplace operator, i.e.
\begin{equation}
\label{plaplace}
-\Delta_p u:=-\mathrm{div}(|\nabla u|^{p-2}\,\nabla u)=f,\qquad \mbox{ in }\Omega,
\end{equation}
with $\Omega\subset \mathbb{R}^N$ open set.
Our analysis is confined to the super-quadratic case, i.e. throughout the whole paper we consider $p>2$.
For $f\equiv 0$, we know that 
\begin{equation}
\label{karen}
|\nabla u|^\frac{p-2}{2}\,\nabla u\in W^{1,2}_{\rm loc}(\Omega;\mathbb{R}^N), 
\end{equation}
This is a well-known regularity result which dates back to Uhlenbeck, see \cite[Lemma 3.1]{Uh}. We refer to \cite[Proposition 3.1]{Mi03} for a generalization of this result. If $f$ is smooth enough, the same result is easily seen to be still true. For example, it is sufficient to take 
\begin{equation}
\label{fsmooth}
f\in W^{1,p'}_{\rm loc}(\Omega),
\end{equation}
where $p'=p/(p-1)$.
However, it is easy to guess that assumption \eqref{fsmooth} is far from being optimal: in the limit case $p=2$, \eqref{karen} boils down to 
\[
\nabla u\in W^{1,2}_{\rm loc}(\Omega;\mathbb{R}^N).
\]
Then from {\it Calder\'on-Zygmund estimates} for the Laplacian, we know that this is true if (and only if)
\[
f\in L^2_{\rm loc}(\Omega).
\]
Thus in this case $f\in L^2_{\rm loc}$ would be the sharp assumption in order to get Uhlenbeck's result.
The main concern of this work is to prove Uhlenbeck's result for solutions of \eqref{plaplace}, under sharp assumptions on $f$.

\subsection{The main result} In this paper we prove the following result. We refer to Section \ref{sec:2} for the notation.
\begin{teo}
\label{teo:local}
Let $p>2$ and let $U\in W^{1,p}_{\rm loc}(\Omega)$ be a local weak solution of equation \eqref{plaplace}. If 
\[
f\in W^{s,p'}_{\rm loc}(\Omega)\qquad \mbox{ with } \frac{p-2}{p}<s\le 1,
\]
then
\[
\mathcal{V}:=|\nabla U|^\frac{p-2}{2}\,\nabla U\in W^{1,2}_{\rm loc}(\Omega;\mathbb{R}^N),
\] 
and
\[
\nabla U\in W^{\sigma,p}_{\rm loc}(\Omega;\mathbb{R}^N),\qquad \mbox{ for }0<\sigma<\frac{2}{p}.
\]
Moreover, for every pair of concentric balls $B_r\Subset B_R\Subset\Omega$ and every $j=1,\dots,N$ we have
the local scaling invariant estimate
\begin{equation}
\label{filipposobolev}
\begin{split}
\int_{B_r} \left|\mathcal{V}_{x_j}\right|^2\,dx&\le \frac{C}{(R-r)^2}\int_{B_R} |\nabla U|^p\,dx+C\left(R^{\left(s-\frac{p-2}{p}\right)}\,[f]_{W^{s,p'}(B_R)}\right)^{p'},
\end{split}
\end{equation}
for a constant $C=C(N,p,s)>0$ which blows-up as $s\searrow (p-2)/p$.
\end{teo}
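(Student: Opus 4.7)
My proof plan is to establish \eqref{filipposobolev} as an a priori estimate on a smooth approximation, and then pass to the limit. I would mollify $f$ as $f_\varepsilon=f*\rho_\varepsilon$ and consider the non-degenerate regularization
\[
-\mathrm{div}\bigl((\varepsilon^2+|\nabla u_\varepsilon|^2)^{(p-2)/2}\nabla u_\varepsilon\bigr)=f_\varepsilon,
\]
with boundary data inherited from $U$ on an intermediate ball. By classical theory $u_\varepsilon\in C^\infty$ and $u_\varepsilon\to U$ in $W^{1,p}_{\rm loc}$; since the left-hand side of \eqref{filipposobolev} is lower semicontinuous under this convergence and $[f_\varepsilon]_{W^{s,p'}}\le [f]_{W^{s,p'}}$, it suffices to prove the estimate for $u_\varepsilon$ with a constant independent of $\varepsilon$.

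The core a priori argument is a finite-difference test in the direction $e_j$. Subtracting the weak formulations at $x$ and $x+he_j$ and testing with $\varphi=\zeta^2\tau_h u_\varepsilon$, where $\zeta\in C_c^\infty(B_R)$ is a cutoff with $\zeta\equiv 1$ on $B_r$ and $|\nabla\zeta|\le C/(R-r)$, I would invoke the classical monotonicity inequality $(A(a)-A(b))\cdot(a-b)\ge c\,|V(a)-V(b)|^2$, with $V(\xi)=|\xi|^{(p-2)/2}\xi$, to bound the main term from below by $c\int\zeta^2|\tau_h\mathcal{V}_\varepsilon|^2$; Cauchy--Schwarz and Young then absorb the cross term and produce a Caccioppoli contribution $C|h|^2(R-r)^{-2}\int_{B_R}|\nabla u_\varepsilon|^p$. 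On the right-hand side, H\"older gives
\[
\Bigl|\int \tau_h f_\varepsilon\,\zeta^2\tau_h u_\varepsilon\,dx\Bigr|\le\|\tau_h f_\varepsilon\|_{L^{p'}(B_R)}\cdot|h|\cdot\|\nabla u_\varepsilon\|_{L^p(B_R)},
\]
and a weighted Young's inequality with parameter $\lambda=|h|^{(2-p)/p}$ splits this into a piece that merges with the Caccioppoli term and a remaining ``dangerous'' piece $C|h|^{(p-2)/(p-1)}\|\tau_h f_\varepsilon\|_{L^{p'}}^{p'}$.

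To convert this family of finite-difference inequalities into a bound involving $[f]_{W^{s,p'}}$, I would integrate in $h$ over $|h|\le R$ against a weight $|h|^{-N-\alpha}\,dh$ with $\alpha\in(0,2]$ and use the Gagliardo identity $[f]_{W^{s,p'}}^{p'}\simeq\int\|\tau_h f\|_{L^{p'}}^{p'}|h|^{-N-sp'}\,dh$. The scale-invariant factor $R^{(s-(p-2)/p)p'}$ in \eqref{filipposobolev} comes out of matching the resulting powers of $|h|$ with those available from $W^{s,p'}$; taking $\alpha=2$ directly yields the $W^{1,2}$ estimate but is only admissible when $s\ge 1$, while for $s\in((p-2)/p,1)$ the single-step integration produces only a fractional estimate $\mathcal{V}_\varepsilon\in W^{\alpha/2,2}$ with $\alpha<2$. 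I would then bootstrap: the fractional regularity of $\mathcal{V}_\varepsilon$ translates, via $\nabla u_\varepsilon=|\mathcal{V}_\varepsilon|^{(2-p)/p}\mathcal{V}_\varepsilon$, into an improved bound for $\|\tau_h u_\varepsilon\|_{L^p}$ in which $|h|$ is replaced by $|h|^{1+\beta}$ for some $\beta>0$; reinjecting this improvement into the basic inequality raises the value of $\alpha$, and iterating eventually reaches $\alpha=2$. I expect the main obstacle to be closing this bootstrap precisely at the sharp threshold $s>(p-2)/p$: the gain per iteration is proportional to $s-(p-2)/p$, which is both what makes the scheme terminate at the $W^{1,2}$ level and what drives the blow-up of $C$ as $s\searrow(p-2)/p$.
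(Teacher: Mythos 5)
Your overall architecture (regularize with a non-degenerate approximation and mollified data, prove a uniform estimate, pass to the limit; difference or differentiate the equation, test with $\zeta^2$ times the differenced solution, use the monotonicity inequality and a Caccioppoli-type absorption) is the same as the paper's. The gap is in the treatment of the data term $I_h=\int \delta_h f_\varepsilon\,\zeta^2\,\delta_h u_\varepsilon\,dx$ and in the bootstrap that is supposed to compensate for it. The engine of your iteration --- ``an improved bound for $\|\delta_h u_\varepsilon\|_{L^p}$ in which $|h|$ is replaced by $|h|^{1+\beta}$'' --- does not exist: for any non-constant function, $\|\delta_h u\|_{L^p}/|h|$ converges (along $h=t\,e_j$, $t\to0$) to $\|u_{x_j}\|_{L^p}$, so a bound $\|\delta_h u\|_{L^p}\lesssim|h|^{1+\beta}$ with $\beta>0$ would force $\nabla u\equiv0$. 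Only \emph{second} differences $\delta_h^2u$ can decay faster than $|h|$. Even if you repair the scheme by a discrete integration by parts, so as to pair $f_\varepsilon$ with second differences, you are left with exactly two elementary bounds for $I_h$: H\"older keeping one difference on each factor, $|I_h|\lesssim\|\delta_h f_\varepsilon\|_{L^{p'}}\,|h|\,\|\nabla u_\varepsilon\|_{L^p}\lesssim|h|^{1+s}$, and H\"older after moving both differences onto $u_\varepsilon$, $|I_h|\lesssim\|f_\varepsilon\|_{L^{p'}}\,\|\delta_{-h}(\zeta^2\delta_h u_\varepsilon)\|_{L^p}\lesssim|h|^{1+\theta}$, where $\theta\le 2/p<1$ is the fractional differentiability of $\nabla u_\varepsilon$ available at any stage of the iteration (even the final one, since $\mathcal{V}_\varepsilon\in W^{1,2}$ only gives $\nabla u_\varepsilon\in\mathcal{N}^{2/p,p}_\infty$ through \eqref{1}). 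Any combination of these two bounds (minimum, geometric mean, or your weighted Young splitting followed by integration in $h$) produces an exponent $1+\lambda s+(1-\lambda)\theta$ with $0\le\lambda\le1$, i.e.\ a \emph{convex combination} of $s$ and $\theta$, which is strictly below the required exponent $2$ whenever $s<1$. This is why re-injecting improved regularity of $\mathcal{V}_\varepsilon$ does not raise $\alpha$ towards $2$: the iteration stalls, and the scheme as described can only cover $s\ge1$, not the sharp range $s>(p-2)/p$.

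What is missing is the step that lets you use the smoothness of $f$ and the extra smoothness of $u_{x_j}$ \emph{simultaneously and additively}: the fractional duality pairing $\bigl|\int f_{x_j}\,u_{x_j}\,\zeta^2\,dx\bigr|\le\|f_{x_j}\|_{\mathcal{D}^{s-1,p'}(B_R)}\,[u_{x_j}\zeta^2]_{W^{1-s,p}(\mathbb{R}^N)}$, i.e.\ an integration by parts of fractional order $1-s$. The first factor is controlled by $[f]_{W^{s,p'}(B_R)}$ through the negative-norm inequality \eqref{filipponecasSI} of Theorem \ref{teo:filipponecas} (whose proof requires the real-interpolation machinery of the Appendix); the second factor is controlled, via \eqref{1}, Proposition \ref{prop:lostinpassing} and the difference-quotient characterization of $W^{1,2}$, by a power of $\|\mathcal{V}_{x_j}\,\zeta\|_{L^2}$ --- and it is precisely here that the condition $1-s<2/p$, i.e.\ $s>(p-2)/p$, is used --- after which this contribution is absorbed into the left-hand side by Young's inequality. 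The exponent count is then $s+1+(1-s)=2$, which no convex combination of your two H\"older bounds can reach. Two secondary remarks: reaching $W^{1,2}$ requires the Nikolskii-type bound $\sup_{h}|h|^{-2}\int\zeta^2|\delta_h\mathcal{V}_\varepsilon|^2\,dx<\infty$ (or directly differentiating the regularized equation, as the paper does); integrating in $h$ against $|h|^{-N-\alpha}\,dh$ diverges at $\alpha=2$ already on the Caccioppoli term. And in the limit $\varepsilon\to0$, the strong $W^{1,p}_{\rm loc}$ convergence of $\nabla u_\varepsilon$ to $\nabla U$ is not a black-box fact: in the paper it is itself extracted from the uniform $W^{1,2}$ bound on $\mathcal{V}_\varepsilon$ together with \eqref{1} and \eqref{3}, so this needs to be argued rather than assumed.
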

\begin{oss}[Sharpness of the assumption]
The assumption on $f$ in the previous result is essentially sharp, in the sense that {\it the result is false for} $s<(p-2)/p$, see Section \ref{sec:5} for an example. Also observe that
\[
\frac{p-2}{p}\searrow 0 \quad \mbox{ as }\quad p\searrow 2,
\]
thus the assumption on $f$ is consistent with the case of the Laplacian recalled above.
\end{oss}
\begin{oss}[Comparison with other results]
The interplay between regularity of the right-hand side $f$ and that of the vector field $\mathcal{V}$ has been considered in detail by Mingione in \cite{Mi07}. However, our Theorem \ref{teo:local} does not superpose with the results of \cite{Mi07}. Indeed, the point of view in \cite{Mi07} is slightly different: the main concern there is to obtain (fractional) differentiability of the vector field
\[
\mathcal{V}:=|\nabla U|^\frac{p-2}{2}\,\nabla U,
\]
when $f$ is not regular. In particular, in \cite{Mi07} the right-hand side $f$ may not belong to the relevant dual Sobolev space and the concept of solution to \eqref{plaplace} has to be carefully defined.
\par
In order to give a flavour of the results in \cite{Mi07}, we recall that \cite[Theorem 1.3]{Mi07} proves that if $2<p\le N$ and $f$ is a Radon measure, then
\[
\mathcal{V}\in W^{\frac{\tau}{2},\frac{2}{p'}}_{\rm loc}(\Omega;\mathbb{R}^N),\qquad \mbox{ for every } 0<\tau<p'. 
\]
When the Radon measure $f$ is more regular, accordingly one can improve the differentibility of $\mathcal{V}$. For example, \cite[Theorem 1.6]{Mi07} proves that if $2<p\le N$ and $f\in L^{1,\lambda}(\Omega)$ with $N-p<\lambda\le N$, then\footnote{The result of \cite[Theorem 1.6]{Mi07} is not stated for $\mathcal{V}$, but rather directly for $\nabla U$. However, an inspection of the proof reveals that one has the claimed regularity of $\mathcal{V}$, see \cite[proof of Theorem 1.6, page 33]{Mi07}.}
\[
\mathcal{V}\in W^{\frac{\tau}{2},2}_{\rm loc}(\Omega;\mathbb{R}^N),\qquad \mbox{ for every } 0<\tau<p'\,\left(1-\frac{N-\lambda}{p}\right). 
\]
Here $L^{1,\lambda}(\Omega)$ is the usual {\it Morrey space}. In this case, the assumption on $f$ guarantees that a solution to \eqref{plaplace} can be defined in variational sense, we refer to \cite{Mi07} for more details.
\par
Some prior results are also due to J. Simon, who proved for example the global regularity
\[
\mathcal{V}\in W^{\frac{\tau}{2},2}(\mathbb{R}^N),\qquad \mbox{ for every } 0<\tau<p',
\]
for solutions $U$ in the whole $\mathbb{R}^N$ with right-hand side $f\in L^{p'}(\mathbb{R}^N)$, see \cite[Theorem 8.1]{SimonR5}.
Finally, even if it is concerned with the solution $U$ rather than the vector field $\mathcal{V}$, we wish to mention a result by Savar\'e contained in \cite{Savare}. This paper is concerned with {\it global regularity} for solutions of \eqref{plaplace} satisfying homogeneous Dirichlet boundary conditions.
In \cite[Theorem 2]{Savare}, it is shown that 
\[
f\in W^{-1+\frac{\tau}{p'},p'}(\Omega) \quad \Longrightarrow \quad U\in W^{1+\frac{\tau}{p},p}(\Omega),\qquad \mbox{ for every } 0<\tau<1,
\]
when $\partial\Omega$ is Lipschitz continuous. This gives a regularity gain on the solution $U$ of
\[
2+\tau\,\left(\frac{1}{p}-\frac{1}{p'}\right),
\]
orders of differentiability, compared to the right-hand side $f$.
It is interesting to notice that by formally taking $\tau=2$ in the previous implication, this essentially gives the regularity gain of Theorem \ref{teo:local}. 
\end{oss}

\subsection{About the proof}
Let us try to explain in a nutshell the key point of estimate \eqref{filipposobolev}. For the sake of simplicity, let us assume that $U$ is smooth (i.e. $U\in C^2$) and explain how to arrive at the a priori estimate \eqref{filipposobolev}. The rigourous proof is then based on a standard approximation procedure. 
\par
For ease of notation, we set
\[
G(z)=\frac{|z|^p}{p},\qquad \mbox{ for }z\in\mathbb{R}^N,
\]
then $U\in W^{1,p}_{\rm loc}(\Omega)$ verifies
\[
\int \langle \nabla G(\nabla U),\nabla \varphi\rangle\,dx=\int f\,\varphi,
\]
for every compactly supported test function $\varphi$.
Uhlenbeck's result is just based on differentiating this equation in direction $x_j$ and then testing it against\footnote{Of course, this test function is not compactly supported. Actually, to make it admissible we have to multiply it by a cut-off function, see Proposition \ref{prop:sobolevuniforme}. This introduces some lower-order terms in the estimate, which are not essential at this level and would just hide the idea of the proof.} $U_{x_j}$. This yields
\begin{equation}
\label{equazionederivata}
\int \langle D^2 G(\nabla U)\,\nabla U_{x_j},\nabla U_{x_j}\rangle\,dx=\int f_{x_j}\,U_{x_j}\,dx.
\end{equation}
By using the convexity properties of $G$, it is easy to see that
\[
\int |\mathcal{V}_{x_j}|^2\,dx=\int \left|\left(|\nabla U|^\frac{p-2}{2}\,\nabla U\right)_{x_j}\right|^2\,dx\lesssim\int \langle D^2 G(\nabla U)\,\nabla U_{x_j},\nabla U_{x_j}\rangle\,dx.
\]
The main difficulty is now to estimate the right-hand side of \eqref{equazionederivata}, without using first order derivatives of $f$. A first na\"ive idea would be to integrate by parts: of course, this can not work, since this would let appear the Hessian of $U$ on which we do not have any estimate. A more clever strategy is to {\it integrate by parts in fractional sense}, i.e. use a duality-based inequality of the form
\[
\left|\int f_{x_j}\,U_{x_j}\,dx\right|\le \|f_{x_j}\|_{W^{s-1,p'}}\,\|U_{x_j}\|_{W^{1-s,p}},
\]
where $W^{s-1,p'}$ is just the topological dual of $W^{1-s,p}$.
The main point is then to prove that
\[
``\mbox{\it taking a fractional derivative of negative order of $f_{x_j}$}
\]
\[
\mbox{\it gives a fractional derivative of positive order\,}''
\] 
i.e. we use that
\begin{equation}
\label{voto}
\|f_{x_j}\|_{W^{s-1,p'}}\lesssim \|f\|_{W^{s,p'}},
\end{equation}
see Theorem \ref{teo:filipponecas} below. 
\par
In order to conclude, we still have to control the term containing fractional derivatives of $U_{x_j}$. This can be absorbed in the left-hand side, once we notice that $U_{x_j}$ is the composition of $\mathcal{V}$ of with a H\"older function. More precisely, we have
\[
U_{x_j}\simeq |\mathcal{V}_j|^\frac{2}{p},
\]
thus if $1-s<2/p$ we get
\[
\begin{split}
\|U_{x_j}\|_{W^{1-s,p}}^p&\lesssim \sup_{|h|>0}\int \frac{|U_{x_j}(\cdot+h)-U_{x_j}|^p}{|h|^2}\,dx \\
&\lesssim \sup_{|h|>0}\int \frac{|\mathcal{V}_j(\cdot+h)-\mathcal{V}_{j}|^2}{|h|^2}\,dx \simeq \int \left|\nabla \mathcal{V}_j\right|^2\,dx\lesssim\int \left| \mathcal{V}_{x_j}\right|^2\,dx.
\end{split}
\]
This would permit to obtain the desired estimate on $\mathcal{V}$, under the standing assumption on $f$.

\begin{oss} 
Actually, the genesis of Theorem \ref{teo:local} is somehow different from the
above sketched proof. Indeed, the fact that fractional Sobolev regularity of $f$ should be enough to obtain $\mathcal{V}\in W^{1,2}$ appeared as natural in the framework of the {\it regularity via duality} strategy presented in \cite{LNRegDual}. This is a general strategy, first used in the much harder context of variational methods for the incompressible Euler equation by Brenier in \cite{br}, and then re-applied to Mean-Field Games (see for instance \cite{ProSan} for a simple case where this strategy is easy to understand). This strategy allows to prove estimates on the incremental ratios 
\[
\frac{u(\cdot+h)-u}{h},
\] 
of the solutions $u$ of convex variational problems by using the non-optimal function $u(\cdot+h)$ in the corresponding primal-dual optimality conditions.
\end{oss}
In our main result above we make the restriction $p>2$. For completeness, let us comment on the sub-quadratic case.
\begin{oss}[The case $1<p<2$] The sub-quadratic case is simpler, indeed we already know that in this case 
\[
f\in L^{p'}_{\rm loc}(\Omega)\quad \Longrightarrow \quad \mathcal{V}\in W^{1,2}_{\rm loc}(\Omega;\mathbb{R}^N)\quad \Longrightarrow\quad \nabla U\in W^{1,p}_{\rm loc}(\Omega;\mathbb{R}^N),
\] 
see for example\footnote{In \cite{de} as well the result is stated directly for $\nabla U$. However, it is easily seen that the very same proof leads to the stronger result for $\mathcal{V}$.} \cite[Theorem]{de} by de Thelin. 
We can have an idea of the proof of this result
by still following the guidelines sketched above. We treat the left-hand side of \eqref{equazionederivata} as before, while on the right-hand side one now performs an integration by parts and uses H\"older's inequality. These yield
\[
\left|\int f_{x_j}\,U_{x_j}\,dx\right|\le \|f\|_{L^{p'}}\,\|U_{x_j\,x_j}\|_{L^p}\qquad \mbox{ and }\qquad \|U_{x_j\,x_j}\|_{L^p}\lesssim \|\nabla \mathcal{V}\|_{L^2}\,\|\nabla U\|_{L^p}^\frac{2-p}{2}, 
\]
and the term containing the gradient of $\nabla \mathcal{V}$ can be absorbed in the left-hand side. 
Observe that
\[
p'\searrow 2 \quad \mbox{ as }\quad p\nearrow 2,
\]
thus again the assumption on $f$ is consistent with the case of the Laplacian.
\end{oss}

\subsection{Plan of the paper}

We set the notation and recall the basic facts on functional spaces in Section \ref{sec:2}. Here, the important point is Theorem \ref{teo:filipponecas}, which proves inequality \eqref{voto}. In Section \ref{sec:3} we consider a regularization of equation \eqref{plaplace} and prove a Sobolev estimate, independent of the regularization parameter (Proposition \ref{prop:sobolevuniforme}). Then in Section \ref{sec:4} we show how to take the estimate to the limit and achieve the proof of Theorem \ref{teo:local}. We show with an example that our assumption is essentially sharp: this is Section \ref{sec:5}. The paper closes with an appendix containing some technical tools needed for the proof of Theorem \ref{teo:filipponecas}.

\begin{ack}
Both authors have been supported by the {\it Agence Nationale de la Recherche}, through the project ANR-12-BS01-0014-01 {\sc Geometrya}.
The first author is a member of the {\it Gruppo Nazionale per l'Analisi Matematica, la Probabilit\`a
e le loro Applicazioni} (GNAMPA) of the {\it Istituto Nazionale di Alta Matematica} (INdAM).
\end{ack}

\section{Preliminaries}
\label{sec:2}

\subsection{Notation}
For a measurable function $\psi:\mathbb{R}^N\to \mathbb{R}^k$ and a vector $h\in\mathbb{R}^N$, we define
\[
\psi_h(x)=\psi(x+h),\qquad \delta_h \psi(x)=\psi_h(x)-\psi(x),
\]
and
\[
\delta_h^2 \psi(x)=\delta_h(\delta_h \psi(x))=\psi_{2h}(x)+\psi(x)-2\,\psi_h(x).
\]
We consider the two vector-valued functions
\[
\nabla G(z)=|z|^{p-2}\,z \qquad \mbox{ and }\qquad V(z)=|z|^\frac{p-2}{2}\,z,\qquad \mbox{ for }z\in\mathbb{R}^N.
\]
The following inequalities are well-known, we omit the proof.
\begin{lemma}
Let $p>2$, for every $z,w\in\mathbb{R}^N$ we have
\begin{equation}
\label{1}
|z-w|\le C_1\,|V(z)-V(w)|^\frac{2}{p},
\end{equation}
\begin{equation}
\label{3}
\Big|V(z)-V(w)\Big|\le C_2\, \left(|z|^\frac{p-2}{2}+|w|^\frac{p-2}{2}\right) |z-w|,
\end{equation}
for some $C_1=C_1(p)>0$ and $C_2=C_2(p)>0$.
\end{lemma}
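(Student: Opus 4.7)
My plan is to derive both bounds from a single direct computation of the Jacobian $DV$ and the fundamental theorem of calculus along the segment $\gamma(t)=(1-t)w+tz$. A short computation gives
\[
DV(z) = |z|^{\frac{p-2}{2}}\,\mathrm{Id} + \frac{p-2}{2}\,|z|^{\frac{p-6}{2}}\,z\otimes z,
\]
which is symmetric with eigenvalue $|z|^{(p-2)/2}$ on $z^{\perp}$ (multiplicity $N-1$) and $\tfrac{p}{2}\,|z|^{(p-2)/2}$ along $z$. In particular
\[
|z|^{\frac{p-2}{2}}\,|\xi|^2 \;\le\; \langle DV(z)\,\xi,\xi\rangle \;\le\; \tfrac{p}{2}\,|z|^{\frac{p-2}{2}}\,|\xi|^2,\qquad \xi\in\mathbb{R}^N.
\]

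For inequality \eqref{3}, I would integrate $V(z)-V(w)=\int_0^1 DV(\gamma(t))\,(z-w)\,dt$, take norms, and use the upper bound on $\|DV\|_{\mathrm{op}}$ together with $|\gamma(t)|\le |z|+|w|$. The elementary fact $(a+b)^{(p-2)/2}\le 2^{(p-2)/2}(a^{(p-2)/2}+b^{(p-2)/2})$ for $a,b\ge 0$ then yields the claim with $C_2 = \tfrac{p}{2}\,2^{(p-2)/2}$.

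For inequality \eqref{1}, the cleanest route is to change variables: $V$ is a homeomorphism of $\mathbb{R}^N$ with inverse
\[
V^{-1}(y) \;=\; |y|^{\frac{2-p}{p}}\,y \;=\; |y|^{\beta-1}\,y,\qquad \beta:=\tfrac{2}{p}\in(0,1).
\]
Setting $a=V(z)$, $b=V(w)$, the claim $|z-w|\le C_1|V(z)-V(w)|^{2/p}$ becomes the $\beta$-H\"older continuity of $V^{-1}$. This I would establish by a short dichotomy on $|a|,|b|$: if $|a|\ge 2|b|$ (or vice versa), the triangle inequality combined with the scalar bound $|a|^{\beta}-|b|^{\beta}\le |\,|a|-|b|\,|^{\beta}$ suffices; if instead $|a|$ and $|b|$ are comparable, a further split according to whether $|a-b|\le \tfrac12\min(|a|,|b|)$ handles the two regimes. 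In the first, the segment $[a,b]$ avoids the origin, and the mean value theorem with $\|DV^{-1}(y)\|\lesssim |y|^{\beta-1}$ gives the Hölder bound; in the second, $\min(|a|,|b|)\lesssim |a-b|$, so the crude bound $|V^{-1}(a)-V^{-1}(b)|\le |a|^{\beta}+|b|^{\beta}$ already concludes.

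The only real subtlety is the degeneracy of $DV$ at $0$ (equivalently, the unboundedness of $DV^{-1}$), which is why the mean value theorem cannot be applied naively; the case split above is exactly designed to bypass this point. No delicate estimate is required once one observes the inverse form of $V$ and treats separately the regimes of comparable and non-comparable magnitudes.
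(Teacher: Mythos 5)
The paper does not actually supply a proof of this lemma (it declares the inequalities well-known and omits it), so there is nothing to match against; judged on its own, your argument is correct and complete. The Jacobian computation, its eigenvalue bounds, and the segment-integration argument give \eqref{3} with the stated constant (note $V\in C^1(\mathbb{R}^N)$ for $p>2$ since $DV(z)\to 0$ as $z\to 0$, so the fundamental theorem of calculus applies even if the segment meets the origin), and your dichotomy for the $\tfrac{2}{p}$-H\"older continuity of $V^{-1}(y)=|y|^{\frac{2}{p}-1}y$ correctly handles the degeneracy at the origin in all regimes. For comparison, the classical route to \eqref{1} in the literature is slightly different: one first proves the lower bound $|V(z)-V(w)|\ge c\,(|z|+|w|)^{\frac{p-2}{2}}|z-w|$ (the companion monotonicity inequality) and then uses $|z|+|w|\ge |z-w|$ to get $|V(z)-V(w)|\ge c\,|z-w|^{\frac{p}{2}}$ directly, which avoids inverting $V$; your inverse-map dichotomy is an equally valid, self-contained alternative.
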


\subsection{Functional spaces}
We recall the definition of some fractional Sobolev spaces needed in the sequel. 
Let $1\le q<\infty$ and let $\psi\in L^q(\mathbb{R}^N)$, for $0<\beta\le 1$ we set
\[
[\psi]_{\mathcal{N}^{\beta,q}_\infty(\mathbb{R}^N)}:=\sup_{|h|>0} \left\|\frac{\delta_h \psi}{|h|^{\beta}}\right\|_{L^q(\mathbb{R}^N)},
\]
and for $0<\beta<2$
\[
[\psi]_{\mathcal{B}^{\beta,q}_\infty(\mathbb{R}^N)}:=\sup_{|h|>0} \left\|\frac{\delta_h^2 \psi}{|h|^{\beta}}\right\|_{L^q(\mathbb{R}^N)}.
\]
We then introduce the two Besov-type spaces
\[
\mathcal{N}^{\beta,q}_\infty(\mathbb{R}^N)=\left\{\psi\in L^q(\mathbb{R}^N)\, :\, [\psi]_{\mathcal{N}^{\beta,q}_\infty(\mathbb{R}^N)}<+\infty\right\},\qquad 0<\beta\le 1,
\]
and
\[
\mathcal{B}^{\beta,q}_\infty(\mathbb{R}^N)=\left\{\psi\in L^q(\mathbb{R}^N)\, :\, [\psi]_{\mathcal{B}^{\beta,q}_\infty(\mathbb{R}^N)}<+\infty\right\},\qquad 0<\beta<2.
\]
We also need the {\it Sobolev-Slobodecki\u{\i} space}
\[
W^{\beta,q}(\mathbb{R}^N)=\left\{\psi\in L^q(\mathbb{R}^N)\, :\, [\psi]_{W^{\beta,q}(\mathbb{R}^N)}<+\infty\right\},\qquad 0<\beta<1,
\]
where the seminorm $[\,\cdot\,]_{W^{\beta,q}(\mathbb{R}^N)}$ is defined by
\[
[\psi]_{W^{\beta,q}(\mathbb{R}^N)}=\left(\int_{\mathbb{R}^N}\int_{\mathbb{R}^N} \frac{|\psi(x)-\psi(y)|^q}{|x-y|^{N+\beta\,q}}\,dx\,dy\right)^\frac{1}{q}.
\]
For $1<\beta<2$, the space $W^{\beta,q}(\mathbb{R}^N)$ consists of
\[
W^{\beta,q}(\mathbb{R}^N)=\left\{\psi\in L^q(\mathbb{R}^N)\, :\, \nabla\psi\in W^{\beta-1,q}(\mathbb{R}^N)\right\}.
\]
More generally, if $\Omega\subset\mathbb{R}^N$ is an open set, the space $W^{\beta,q}(\Omega)$ is defined by
\[
W^{\beta,q}(\Omega)=\left\{\psi\in L^q(\Omega)\, :\, [\psi]_{W^{\beta,q}(\Omega)}<+\infty\right\},\qquad 0<\beta<1,
\]
and the seminorm $[\,\cdot\,]_{W^{\beta,q}(\Omega)}$ is defined accordingly. We endow this space with the norm
\[
\|\psi\|_{W^{\beta,q}(\Omega)}=\|\psi\|_{L^q(\Omega)}+[\psi]_{W^{\beta,q}(\Omega)}.
\]
For an open bounded set $\Omega\subset\mathbb{R}^N$ and $0<\beta\le 1$, the {\it homogeneous Sobolev-Slobodecki\u{\i} space} $\mathcal{D}^{\beta,q}_0(\Omega)$ is defined as the completion of $C^\infty_0(\Omega)$ with respect to the norm
\[
\psi\mapsto \|\psi\|_{\mathcal{D}^{\beta,q}_0(\Omega)}:=[\psi]_{W^{\beta,q}(\mathbb{R}^N)}\qquad \mbox{ if } 0<\beta<1,
\] 
or
\[
\psi\mapsto \|\psi\|_{\mathcal{D}^{1,q}_0(\Omega)}:=\|\nabla \psi\|_{L^q(\mathbb{R}^N)}\qquad \mbox{ if } \beta=1.
\]
Finally, for $0<\beta\le 1$ the topological dual of $\mathcal{D}^{\beta,q}_0(\Omega)$ will be denoted by $\mathcal{D}^{-\beta,q'}(\Omega)$. We endow this space with the natural dual norm, defined by
\[
\|F\|_{W^{-\beta,q'}(\Omega)}=\sup\left\{|\langle F,\varphi\rangle|\, :\, \varphi\in C^\infty_0(\Omega)\ \mbox{ with }\ \|\psi\|_{\mathcal{D}^{\beta,q}_0(\Omega)}\le 1\right\},\qquad F\in \mathcal{D}^{-\beta,q'}(\Omega).
\]
\begin{oss}
\label{oss:poincare}
It is not difficult to show that for $0<\beta\le 1$ we have
\[
\mathcal{D}^{\beta,q}_0(\Omega)\hookrightarrow L^q(\Omega),
\]
thanks to Poincar\'e inequality. The latter reads as
\[
\|u\|_{L^q(\Omega)}^q\le C\,|\Omega|^\frac{\beta\,q}{N}\,\|u\|^q_{\mathcal{D}^{\beta,q}_0(\Omega)},
\]
with $C=C(N,\beta,q)>0$.
\end{oss}

\subsection{Embedding results}
In order to make the paper self-contained, we recall some functional inequalities needed in dealing with fractional Sobolev spaces.
\begin{lemma}
Let $0<\beta<1$ and $1\le q<\infty$, then we have the continuous embedding 
\[
\mathcal{B}^{\beta,q}_\infty(\mathbb{R}^N)\hookrightarrow \mathcal{N}^{\beta,q}_\infty(\mathbb{R}^N).
\] 
More precisley, for every $\psi\in \mathcal{B}^{\beta,q}_\infty(\mathbb{R}^N)$ we have
\[
[\psi]_{\mathcal{N}^{\beta,q}_\infty(\mathbb{R}^N)}\le \frac{C}{1-\beta}\,[\psi]_{\mathcal{B}^{\beta,q}_\infty(\mathbb{R}^N)},
\]
for some constant $C=C(N,q)>0$. 
\end{lemma}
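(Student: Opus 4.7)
My plan is to write the first difference $\delta_h\psi$ as a telescoping combination of dyadically rescaled second differences, and then pass to $L^q$ norms. The starting point is the elementary algebraic identity
\[
\delta_{2h}\psi(x) = \bigl[\psi(x+2h) - 2\,\psi(x+h) + \psi(x)\bigr] + 2\bigl[\psi(x+h) - \psi(x)\bigr] = \delta_h^2\psi(x) + 2\,\delta_h\psi(x),
\]
which rearranges to
\[
\delta_h\psi(x) = \tfrac{1}{2}\,\delta_{2h}\psi(x) - \tfrac{1}{2}\,\delta_h^2\psi(x).
\]
Applying this identity iteratively to the leading term (with $h$ replaced in turn by $2h,\,4h,\,\dots,\,2^{N-1}h$), I would obtain the telescoping formula
\[
\delta_h\psi(x) = \frac{1}{2^N}\,\delta_{2^N h}\psi(x) - \sum_{k=0}^{N-1}\frac{1}{2^{k+1}}\,\delta_{2^k h}^2\psi(x), \qquad N\in \mathbb{N}.
\]

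The second step is to take the $L^q(\mathbb{R}^N)$ norm of both sides and apply the triangle inequality. For the leading term I would use the trivial bound $\|\delta_{2^N h}\psi\|_{L^q}\le 2\,\|\psi\|_{L^q}$, while for each summand I would use the Besov-type hypothesis $\|\delta_{2^k h}^2\psi\|_{L^q}\le (2^k|h|)^\beta\,[\psi]_{\mathcal{B}^{\beta,q}_\infty(\mathbb{R}^N)}$. This produces
\[
\|\delta_h\psi\|_{L^q} \le \frac{2\,\|\psi\|_{L^q}}{2^N} + |h|^\beta\,[\psi]_{\mathcal{B}^{\beta,q}_\infty(\mathbb{R}^N)}\sum_{k=0}^{N-1}2^{k(\beta-1)-1}.
\]

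Finally, I would send $N\to\infty$: the leading term disappears because $\psi\in L^q$, and since $\beta<1$ the geometric series converges, giving
\[
\|\delta_h\psi\|_{L^q} \le \frac{|h|^\beta}{2\,(1-2^{\beta-1})}\,[\psi]_{\mathcal{B}^{\beta,q}_\infty(\mathbb{R}^N)}.
\]
Dividing by $|h|^\beta$ and taking the supremum over $|h|>0$, and then using the elementary estimate $1-2^{\beta-1} = 1 - e^{-(1-\beta)\ln 2} \ge c\,(1-\beta)$ for $\beta\in(0,1)$, yields precisely the claimed inequality with constant $C/(1-\beta)$.

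I do not expect any serious obstacle: the only creative step is spotting the rearrangement $\delta_h\psi = \tfrac{1}{2}\,\delta_{2h}\psi - \tfrac{1}{2}\,\delta_h^2\psi$, after which the proof is purely mechanical. The blow-up factor $1/(1-\beta)$ is built into the geometric series and simply reflects the failure of the embedding at the endpoint $\beta = 1$.
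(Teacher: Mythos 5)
Your proof is correct, and it takes a genuinely different route from the paper. The paper does not reprove the inequality from scratch: it quotes the inhomogeneous estimate $[\psi]_{\mathcal{N}^{\beta,q}_\infty}\le \frac{C}{1-\beta}\bigl([\psi]_{\mathcal{B}^{\beta,q}_\infty}+\|\psi\|_{L^q}\bigr)$ from an earlier reference and then kills the lower-order $L^q$ term by applying that estimate to the rescaled functions $\psi(\lambda\,\cdot)$ and letting $\lambda\to+\infty$. You instead prove the homogeneous inequality directly: the identity $\delta_h\psi=\tfrac12\,\delta_{2h}\psi-\tfrac12\,\delta_h^2\psi$, iterated dyadically, gives
\[
\delta_h\psi=\frac{1}{2^n}\,\delta_{2^n h}\psi-\sum_{k=0}^{n-1}\frac{1}{2^{k+1}}\,\delta_{2^k h}^2\psi,
\]
and after taking $L^q$ norms the remainder $2^{-n}\|\delta_{2^n h}\psi\|_{L^q}\le 2^{1-n}\|\psi\|_{L^q}$ vanishes as $n\to\infty$ (here you use only that $\psi\in L^q$, which is part of the definition of $\mathcal{B}^{\beta,q}_\infty$), while the geometric series $\sum_k 2^{k(\beta-1)-1}$ produces the factor $\frac{1}{2(1-2^{\beta-1})}\le \frac{1}{1-\beta}$. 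This is essentially the classical Marchaud-type argument; it is self-contained, needs no external lemma and no scaling step, and even yields a constant independent of $N$ and $q$ with the same blow-up rate $1/(1-\beta)$ as $\beta\nearrow 1$, which is what the paper needs. Two cosmetic remarks: your iteration index clashes notationally with the dimension $N$, so rename it (say $n$); and you should state explicitly that the bound $\|\delta_{2^k h}^2\psi\|_{L^q}\le (2^k|h|)^\beta[\psi]_{\mathcal{B}^{\beta,q}_\infty}$ is just the definition of the seminorm applied with increment $2^k h$. Neither point affects the validity of the argument.
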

\begin{proof}
We already know that 
\[
[\psi]_{\mathcal{N}^{\beta,q}_\infty(\mathbb{R}^N)}\le \frac{C}{1-\beta}\,\left[[\psi]_{\mathcal{B}^{\beta,q}_\infty(\mathbb{R}^N)}+\|\psi\|_{L^q(\mathbb{R}^N)}\right],
\]
se for example \cite[Lemma 2.3]{brolin}.
In particular, by using this inequality for $\psi^\lambda(x)=\psi(\lambda\,x)$ with $\lambda>0$, after a change of variable we obtain
\[
\lambda^{\beta-\frac{N}{q}}\,[\psi]_{\mathcal{N}^{\beta,q}_\infty(\mathbb{R}^N)}\le \frac{C}{1-\beta}\,\left[\lambda^{\beta-\frac{N}{q}}\,[\psi]_{\mathcal{B}^{\beta,q}_\infty(\mathbb{R}^N)}+\lambda^{-\frac{N}{q}}\,\|\psi\|_{L^q(\mathbb{R}^N)}\right].
\]
By multiplying by $\lambda^{N/q-\beta}$ and taking the limit as $\lambda$ goes to $+\infty$, we get the desired inequality.
\end{proof}
\begin{prop}
\label{prop:lostinpassing}
Let $1\le q<\infty$ and $0<\alpha<\beta< 1$. We have the continuous embedding 
\[
\mathcal{N}^{\beta,q}_\infty(\mathbb{R}^N)\hookrightarrow W^{\alpha,q}(\mathbb{R}^N).
\]
More precisely, for every $\psi\in \mathcal{N}^{\beta,q}_\infty(\mathbb{R}^N)$ we have
\[
[\psi]^q_{W^{\alpha,q}(\mathbb{R}^N)}\le C\,\frac{\beta}{(\beta-\alpha)\,\alpha}\, \left([\psi]_{\mathcal{N}^{\beta,q}_\infty(\mathbb{R}^N)}^q\right)^\frac{\alpha}{\beta}\,\left(\|\psi\|^q_{L^q(\mathbb{R}^N)}\right)^\frac{\beta-\alpha}{\beta},
\]
for some constant $C=C(N,q)>0$.
\end{prop}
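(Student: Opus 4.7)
The plan is to reduce the Sobolev-Slobodecki\u{\i} seminorm to an integral of $L^q$ norms of the first-order difference $\delta_h\psi$, and then split the integral at an optimally chosen radius in $|h|$. More precisely, after the change of variables $h = y-x$ one has
\[
[\psi]^q_{W^{\alpha,q}(\mathbb{R}^N)} = \int_{\mathbb{R}^N} \frac{\|\delta_h\psi\|_{L^q(\mathbb{R}^N)}^q}{|h|^{N+\alpha q}}\,dh,
\]
and for any $r>0$ I would decompose this as the sum of the integral over $\{|h|\le r\}$ and over $\{|h|>r\}$.

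On the region $\{|h|\le r\}$ I would use the Nikol'ski\u{\i}-type bound $\|\delta_h\psi\|_{L^q} \le |h|^\beta [\psi]_{\mathcal{N}^{\beta,q}_\infty}$, which after an integration in polar coordinates gives a contribution of order
\[
\frac{N\omega_N}{(\beta-\alpha)q}\, r^{(\beta-\alpha)q}\, [\psi]^q_{\mathcal{N}^{\beta,q}_\infty(\mathbb{R}^N)}.
\]
On the region $\{|h|>r\}$ the difference is controlled trivially by $\|\delta_h\psi\|_{L^q}\le 2\|\psi\|_{L^q}$, which after another polar-coordinate computation contributes
\[
\frac{2^q N\omega_N}{\alpha q}\, r^{-\alpha q}\, \|\psi\|^q_{L^q(\mathbb{R}^N)}.
\]
Both integrals are of course finite precisely because $\alpha<\beta$ (small-$h$ part) and $\alpha>0$ (large-$h$ part), which is where the two poles in the final constant come from.

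To finish I would optimize the sum in $r$: choosing $r^{\beta q}$ of order $\|\psi\|^q_{L^q}/[\psi]^q_{\mathcal{N}^{\beta,q}_\infty}$ balances the two contributions and produces exactly the interpolation exponents $\alpha/\beta$ and $(\beta-\alpha)/\beta$ stated in the proposition. A minor but genuine bookkeeping step is then to bound
\[
\frac{1}{(\beta-\alpha)\,q}+\frac{2^q}{\alpha\,q}\;\le\; C(q)\,\frac{\alpha+(\beta-\alpha)}{(\beta-\alpha)\,\alpha}\;=\;C(q)\,\frac{\beta}{(\beta-\alpha)\,\alpha},
\]
which recovers the precise form of the blow-up of the constant as $\alpha\searrow 0$ or $\alpha\nearrow \beta$. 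The only mildly delicate point is making sure this constant tracking is done carefully, since everything else is just Fubini, polar coordinates, and a one-variable minimization; if one is willing to lose a multiplicative constant the optimization step can be replaced by just picking $r$ such that the two terms are equal.
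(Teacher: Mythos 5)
Your argument is correct and follows essentially the same route as the paper: the paper invokes a known splitting estimate (its cited Proposition 2.7 of the Brasco--Lindgren reference), which bounds $[\psi]^q_{W^{\alpha,q}(\mathbb{R}^N)}$ by $\frac{h_0^{(\beta-\alpha)q}}{\beta-\alpha}$ times the Nikol'ski\u{\i} term plus $\frac{h_0^{-\alpha q}}{\alpha}\,\|\psi\|^q_{L^q}$, and then optimizes in $h_0$, exactly as you do with your radius $r$. The only difference is that you reprove that splitting inequality from scratch (Fubini, the bounds $\|\delta_h\psi\|_{L^q}\le |h|^\beta[\psi]_{\mathcal{N}^{\beta,q}_\infty}$ for $|h|\le r$ and $\|\delta_h\psi\|_{L^q}\le 2\|\psi\|_{L^q}$ for $|h|>r$, and polar coordinates), which makes your proof self-contained but mathematically identical, and your constant bookkeeping correctly reproduces the stated blow-up factor $\beta/((\beta-\alpha)\,\alpha)$.
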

\begin{proof}
Let us fix $h_0>0$, by appealing for example to \cite[Proposition 2.7]{brolin}, we already know that
\[
[\psi]^q_{W^{\alpha,q}(\mathbb{R}^N)}\le C\,\left(\frac{h_0^{(\beta-\alpha)\,q}}{\beta-\alpha}\, \sup_{0<|h|<h_0} \left\|\frac{\delta_h \psi}{|h|^{\beta}}\right\|_{L^q(\mathbb{R}^N)}^q+\frac{h_0^{-\alpha\,q}}{\alpha}\,\|\psi\|^q_{L^q(\mathbb{R}^N)}\right).
\]
for a constant $C$ depending on $N$ and $q$ only. If we now optimize the right-hand side in $h_0$ we get the desired conclusion.
\end{proof}
\begin{prop}
Let $1\le q<\infty$ and $0<\alpha<\beta<1$. We have the continuous embedding 
\[
\mathcal{B}^{1+\beta,q}_{\infty}(\mathbb{R}^N)\hookrightarrow W^{1+\alpha,q}(\mathbb{R}^N).
\] 
In particular, for every $\psi\in \mathcal{B}^{1+\beta,q}_\infty(\mathbb{R}^N)$ we have $\nabla \psi\in W^{\alpha,q}(\mathbb{R}^N)$, with the following estimates
\begin{equation}
\label{02081980}
\|\nabla \psi\|^q_{L^q(\mathbb{R}^N)}\le \frac{C}{\beta^\frac{\beta+q}{\beta+1}}\,\left(\|\psi\|^q_{L^q(\mathbb{R}^N)}\right)^\frac{\beta}{\beta+1}\, \left([\psi]^q_{\mathcal{B}^{1+\beta,q}_\infty(\mathbb{R}^N)}\right)^\frac{1}{\beta+1},
\end{equation}
for some $C=C(N,q)>0$, and
\begin{equation}
\label{02082015}
[\nabla \psi]^q_{W^{\alpha,q}(\mathbb{R}^N)}\le C\left( [\psi]^q_{\mathcal{B}^{1+\beta,q}_\infty(\mathbb{R}^N)}\right)^\frac{\alpha+1}{\beta+1}\,\left(\|\psi\|^q_{L^q(\mathbb{R}^N)}\right)^\frac{\beta-\alpha}{\beta+1},
\end{equation}
for some $C=C(N,q,\alpha,\beta)>0$, which blows up as $\alpha\nearrow \beta$, $\beta\searrow 0$ or $\beta\nearrow 1$.
\end{prop}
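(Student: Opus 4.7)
The plan is to derive both estimates from a single Marchaud-type recursion, combined with Proposition \ref{prop:lostinpassing}. The starting point is the elementary identity
\[
\delta_h \psi = \frac{1}{2}\,\delta_{2h}\psi - \frac{1}{2}\,\delta_h^2\psi,
\]
obtained by rewriting $\delta_h^2 \psi = \delta_{2h}\psi - 2\,\delta_h \psi$. Iterating this relation, for every $k\ge 1$ one gets
\[
\delta_h \psi = \frac{1}{2^k}\,\delta_{2^k h}\,\psi - \sum_{j=0}^{k-1}\frac{1}{2^{j+1}}\,\delta_{2^j h}^2\,\psi.
\]

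For \eqref{02081980}, I would take the $L^q$-norm of the previous identity, use the trivial bound $\|\delta_{2^k h}\psi\|_{L^q}\le 2\,\|\psi\|_{L^q}$ and the definition of the $\mathcal{B}^{1+\beta,q}_\infty$-seminorm in the form $\|\delta_{2^j h}^2 \psi\|_{L^q}\le (2^j|h|)^{1+\beta}\,[\psi]_{\mathcal{B}^{1+\beta,q}_\infty(\mathbb{R}^N)}$, and sum the resulting geometric series. After dividing by $|h|$ this yields, for every $k$,
\[
\frac{\|\delta_h \psi\|_{L^q(\mathbb{R}^N)}}{|h|}\le \frac{2\,\|\psi\|_{L^q(\mathbb{R}^N)}}{2^k\,|h|}+\frac{|h|^\beta\,[\psi]_{\mathcal{B}^{1+\beta,q}_\infty(\mathbb{R}^N)}}{2\,(2^\beta-1)}\,\bigl(2^{k\beta}-1\bigr).
\]
Optimizing over $k$ (treated as a continuous parameter) gives a bound of the form $C(\beta)\,\|\psi\|_{L^q}^{\beta/(\beta+1)}\,[\psi]_{\mathcal{B}^{1+\beta,q}_\infty}^{1/(\beta+1)}$, which is uniform in $h$. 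Standard weak compactness of difference quotients then shows that the distributional gradient lies in $L^q$ and satisfies \eqref{02081980} after raising to the $q$-th power; the $\beta$-dependence comes from $2^\beta-1\simeq \beta$, producing the announced factor $\beta^{-(\beta+q)/(\beta+1)}$.

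For \eqref{02082015}, I would first upgrade this to $\nabla\psi\in\mathcal{N}^{\beta,q}_\infty(\mathbb{R}^N)$ by running the same iteration with $g=\nabla\psi$. The resulting identity
\[
\delta_h\nabla\psi = -\sum_{j=0}^{\infty}\frac{1}{2^{j+1}}\,\nabla\bigl(\delta_{2^j h}^2\psi\bigr)
\]
is then controlled term by term by applying estimate \eqref{02081980} to $\delta_{2^j h}^2\psi$, using the trivial bound $[\delta_{2^j h}^2\psi]_{\mathcal{B}^{1+\beta,q}_\infty}\le 4\,[\psi]_{\mathcal{B}^{1+\beta,q}_\infty}$ (which follows from the commutation $\delta_{h'}^2\delta_h^2=\delta_h^2\delta_{h'}^2$) together with $\|\delta_{2^j h}^2\psi\|_{L^q}\le (2^j|h|)^{1+\beta}\,[\psi]_{\mathcal{B}^{1+\beta,q}_\infty}$. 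After simplification the two contributions combine into $\|\nabla\delta_{2^j h}^2\psi\|_{L^q}\le C\,(2^j|h|)^\beta\,[\psi]_{\mathcal{B}^{1+\beta,q}_\infty}$, and summing the series (which converges because $\beta<1$) delivers the bound $[\nabla\psi]_{\mathcal{N}^{\beta,q}_\infty(\mathbb{R}^N)}\le C(\beta)\,[\psi]_{\mathcal{B}^{1+\beta,q}_\infty(\mathbb{R}^N)}$.

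The proof is then concluded by applying Proposition \ref{prop:lostinpassing} to $\nabla\psi$ and substituting the two bounds just obtained. A direct computation shows that the exponent of $[\psi]_{\mathcal{B}^{1+\beta,q}_\infty}^q$ adds up to $\alpha/\beta+(\beta-\alpha)/(\beta(\beta+1))=(\alpha+1)/(\beta+1)$ and that the exponent of $\|\psi\|_{L^q}^q$ equals $(\beta-\alpha)/\beta\cdot\beta/(\beta+1)=(\beta-\alpha)/(\beta+1)$, matching exactly \eqref{02082015}. I expect the main difficulty to be the careful bookkeeping of constants: the three prefactors $1/(2^\beta-1)$, $1/(1-2^{\beta-1})$ and $\beta/((\beta-\alpha)\,\alpha)$ accumulate along the chain, and one has to verify that they multiply into a finite constant $C(N,q,\alpha,\beta)$ which blows up in exactly the three regimes $\alpha\nearrow\beta$, $\beta\searrow 0$ and $\beta\nearrow 1$ singled out in the statement.
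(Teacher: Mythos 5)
Your argument is correct in substance, and its final step is exactly the paper's: both proofs conclude by applying Proposition \ref{prop:lostinpassing} to $\nabla\psi$ and combining it with an $L^q$ bound and an $\mathcal{N}^{\beta,q}_\infty$ bound for $\nabla\psi$, and your exponent bookkeeping ($\alpha/\beta+(\beta-\alpha)/(\beta(\beta+1))=(\alpha+1)/(\beta+1)$, etc.) is the same. The difference is in how the two bounds on $\nabla\psi$ are obtained: the paper imports them in non-homogeneous form from \cite[Proposition 2.4]{brolin}, namely $\|\nabla\psi\|^q_{L^q}\le C\,\|\psi\|^q_{L^q}+C\,\beta^{-q}\,[\psi]^q_{\mathcal{B}^{1+\beta,q}_\infty}$ and $[\nabla\psi]^q_{\mathcal{N}^{\beta,q}_\infty}\le C\,\beta^{-q}(1-\beta)^{-q}\,[\psi]^q_{\mathcal{B}^{1+\beta,q}_\infty}$, and homogenizes the first by the rescaling $\psi(\lambda\,x)$ with optimization in $\lambda$; you instead re-derive both bounds self-containedly from the Marchaud-type dyadic identity, getting the multiplicative form of \eqref{02081980} by optimizing the dyadic scale, and the Nikolskii bound by feeding \eqref{02081980} back into the telescoped series. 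Your intermediate claims check out: $\|\delta^2_{2^jh}\psi\|_{L^q}\le(2^j|h|)^{1+\beta}[\psi]_{\mathcal{B}^{1+\beta,q}_\infty}$ and $[\delta^2_{2^jh}\psi]_{\mathcal{B}^{1+\beta,q}_\infty}\le 4\,[\psi]_{\mathcal{B}^{1+\beta,q}_\infty}$ do combine into $\|\nabla\delta^2_{2^jh}\psi\|_{L^q}\le C(\beta)\,(2^j|h|)^\beta\,[\psi]_{\mathcal{B}^{1+\beta,q}_\infty}$, and the series converges precisely because $\beta<1$. What your route buys is self-containedness (you essentially reprove the quoted lemma of \cite{brolin}); what the paper's route buys is brevity, delegating the difference-quotient analysis to the reference and keeping only the scaling and interpolation steps.

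Two details to tighten. First, your dyadic optimization naturally produces the constant $C\,\beta^{-q}$ in \eqref{02081980}, not the stated $C\,\beta^{-(\beta+q)/(\beta+1)}$; this is harmless, since $\beta^{-q}\le e^{(q-1)/e}\,\beta^{-(\beta+q)/(\beta+1)}$ for $0<\beta<1$ (because $\beta^{-\beta}\le e^{1/e}$), so the two forms agree up to a factor $C(q)$ --- but you should record this rather than assert that the exponent $(\beta+q)/(\beta+1)$ comes out directly. Second, for $q=1$ the phrase ``standard weak compactness of difference quotients'' only yields a measure-valued gradient ($\psi\in BV$); to cover $q=1$ use your identity in the downward direction, $\delta_h\psi=2\,\delta_{h/2}\psi+\delta^2_{h/2}\psi$, which shows that the dyadic difference quotients $\delta_{2^{-k}h}\psi/(2^{-k}|h|)$ form a Cauchy sequence in $L^q(\mathbb{R}^N)$ and hence converge strongly to the distributional derivative for every $q\ge 1$. (Likewise, when the optimal dyadic scale falls below $|h|$, simply invoke the trivial bound $\|\delta_h\psi\|_{L^q}\le 2\,\|\psi\|_{L^q}$, which is already below the claimed right-hand side in that regime.)
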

\begin{proof}
We already know that
\[
\|\nabla \psi\|^q_{L^q(\mathbb{R}^N)}\le C\,\|\psi\|^q_{L^q(\mathbb{R}^N)}+\frac{C}{\beta^q}\, [\psi]^q_{\mathcal{B}^{1+\beta,q}_\infty(\mathbb{R}^N)},
\]
see for example \cite[Proposition 2.4]{brolin}.
By replacing $\psi$ with the rescaled function $\psi^\lambda(x)=\psi(\lambda\,x)$ and optimizing in $\lambda$, we get \eqref{02081980}.
\par
For the second estimate \eqref{02082015}, it is sufficient to observe that
\[
[\nabla \psi]^q_{\mathcal{N}^{\beta,q}_\infty(\mathbb{R}^N)}\le \frac{C}{\beta^q\,(1-\beta)^q} [\psi]^q_{\mathcal{B}^{1+\beta,q}_\infty(\mathbb{R}^N)},
\]
again by \cite[Proposition 2.4]{brolin}.
Then by using Proposition \ref{prop:lostinpassing} for $\nabla \psi$ and \eqref{02081980}, we get the desired conclusion. 
\end{proof}

\subsection{An inequality for negative norms}
As explained in the Introduction, a crucial r\^ole in the proof of Theorem \ref{teo:local} is played by the following weak generalization of the so-called {\it Ne\v{c}as' negative norm Theorem} (which corresponds to $\beta=0$, see \cite{Ne}).
\begin{teo}
\label{teo:filipponecas}
Let $0<\beta<1$ and $1<q<\infty$. Let $B\subset\mathbb{R}^N$ be an open ball, for every $f\in W^{\beta,q}(B)$ we have
\begin{equation}
\label{filipponecasSI}
\|f_{x_j}\|_{\mathcal{D}^{\beta-1,q}(B)}\le C\,[f]_{W^{\beta,q}(B)},\qquad j=1,\dots,N,
\end{equation}
for a constant $C=C(N,\beta,q)>0$.
\end{teo}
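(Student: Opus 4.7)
The plan is to unpack the dual norm and reduce to a global estimate on $\mathbb{R}^N$. By definition, I must bound $|\int_{B} f\,\partial_{x_j}\varphi\,dx|$ uniformly over $\varphi\in C^\infty_0(B)$ with $[\varphi]_{W^{1-\beta,q'}(\mathbb{R}^N)}\le 1$. Since $\varphi$ has compact support in $B$, $\int_B\partial_{x_j}\varphi\,dx=0$, so I would first replace $f$ by $f-\overline{f}_B$; a Poincar\'e-Wirtinger inequality for Sobolev-Slobodecki\u{\i} spaces on balls, of the same flavor as Remark \ref{oss:poincare}, gives $\|f-\overline{f}_B\|_{L^q(B)}\le C\,|B|^{\beta/N}\,[f]_{W^{\beta,q}(B)}$. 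A standard extension operator for $W^{\beta,q}$ on the ball $B$ then produces $\tilde f\in W^{\beta,q}(\mathbb{R}^N)$ with $\|\tilde f\|_{W^{\beta,q}(\mathbb{R}^N)}\le C\,[f]_{W^{\beta,q}(B)}$, so after extending $\varphi$ by zero the task becomes the global bound
\[
\left|\int_{\mathbb{R}^N} g\,\partial_j\varphi\,dx\right|\le C\,\|g\|_{W^{\beta,q}(\mathbb{R}^N)}\,[\varphi]_{W^{1-\beta,q'}(\mathbb{R}^N)},
\]
for $g\in W^{\beta,q}(\mathbb{R}^N)$, $\varphi\in C^\infty_0(\mathbb{R}^N)$.

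For this global inequality I would exploit the factorization $\partial_j=R_j(-\Delta)^{1/2}$, where $R_j$ is the $j$-th Riesz transform. Using $R_j^{\ast}=-R_j$ together with the singular integral representation
\[
(-\Delta)^{1/2}\varphi(x)=c_N\,\mathrm{PV}\!\int\frac{\varphi(x)-\varphi(y)}{|x-y|^{N+1}}\,dy,
\]
and symmetrizing in $x\leftrightarrow y$, one arrives at the identity
\[
\int_{\mathbb{R}^N} g\,\partial_j\varphi\,dx=-\frac{c_N}{2}\int\!\!\int\frac{(R_j g(x)-R_j g(y))\,(\varphi(x)-\varphi(y))}{|x-y|^{N+1}}\,dx\,dy,
\]
whose integrand is integrable near the diagonal thanks to the joint cancellation of both differences. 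H\"older's inequality with exponents $(q,q')$, applied to the factorization $|x-y|^{-(N+1)}=|x-y|^{-(N+\beta q)/q}\cdot|x-y|^{-(N+(1-\beta)q')/q'}$, then bounds the right-hand side by $C\,[R_j g]_{W^{\beta,q}(\mathbb{R}^N)}\,[\varphi]_{W^{1-\beta,q'}(\mathbb{R}^N)}$. Since $R_j$ commutes with translations and is bounded on $L^q$ for $1<q<\infty$ (Calder\'on-Zygmund), one has $\|\delta_h R_j g\|_{L^q}=\|R_j\delta_h g\|_{L^q}\le C\|\delta_h g\|_{L^q}$ for every $h$, whence $[R_j g]_{W^{\beta,q}(\mathbb{R}^N)}\le C\,[g]_{W^{\beta,q}(\mathbb{R}^N)}$, closing the estimate.

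The main obstacle I expect lies in the rigorous justification of the kernel identity: the unsymmetrized integral $\int\!\int R_j g(x)(\varphi(x)-\varphi(y))\,|x-y|^{-(N+1)}\,dx\,dy$ is only conditionally convergent, so exchanging the order of integration and passing to the symmetric form requires care. Tracking the explicit constants coming from the representation of $(-\Delta)^{1/2}$ and from the $L^q$-boundedness of $R_j$, and in particular explaining the degeneration of $C$ as $\beta\to 0^+$ or $\beta\to 1^-$, is the other delicate point, to which the paper's appendix is presumably devoted.
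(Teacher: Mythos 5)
Your proof is correct, but it follows a genuinely different route from the paper's. The paper never extends $f$ to $\mathbb{R}^N$ and never touches singular integrals: it regards $T_jf=f_{x_j}$ as a linear operator that is trivially bounded from $W^{1,q}(B)$ to $L^q(B)$ and from $L^q(B)$ to $\mathcal{D}^{-1,q}(B)$, and interpolates by the real $K$-method, using $W^{\beta,q}(B)=\left(L^q(B),W^{1,q}(B)\right)_{\beta,q}$; the entire appendix is spent identifying the target space $\left(\mathcal{D}^{-1,q}(B),L^q(B)\right)_{\beta,q}$ with $\mathcal{D}^{\beta-1,q}(B)$, via the Duality Theorem and an explicit computation showing $\left(L^{q'}(B),\mathcal{D}^{1,q'}_0(B)\right)_{1-\beta,q'}=\mathcal{D}^{1-\beta,q'}_0(B)$, and the lower-order term $\|f\|_{L^q(B)}$ is then removed by a dilation argument. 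You instead reduce to a global inequality on $\mathbb{R}^N$ (mean subtraction, fractional Poincar\'e--Wirtinger, extension operator) and prove it through the factorization $\partial_j=R_j(-\Delta)^{1/2}$, the symmetrized bilinear form of $(-\Delta)^{1/2}$, the kernel splitting $|x-y|^{-N-1}=|x-y|^{-(N+\beta q)/q}\,|x-y|^{-(N+(1-\beta)q')/q'}$ for H\"older, and the commutation $\|\delta_h R_jg\|_{L^q}=\|R_j\delta_h g\|_{L^q}\le C\|\delta_h g\|_{L^q}$. Your route is shorter and more quantitative, with the $L^q$-boundedness of the Riesz transform and the $W^{\beta,q}$ extension theorem on a ball as its only black boxes; the paper's route avoids Calder\'on--Zygmund theory and extensions, works directly on $B$, and yields the interpolation identification as a by-product, at the cost of the long appendix. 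Two points you should make explicit to close your argument: (i) the constants coming from the extension operator and the Poincar\'e inequality depend on the radius of $B$, so first reduce to the unit ball, observing that both sides of \eqref{filipponecasSI} scale like $\lambda^{N/q-\beta}$ under $x\mapsto x/\lambda$ (this is exactly the scaling step the paper performs at the end of its proof); (ii) the symmetrization identity is justified by truncating the kernel to $|x-y|>\varepsilon$, where Fubini applies, and letting $\varepsilon\to 0$ using the absolute convergence of the symmetrized integral, which your H\"older bound already guarantees, together with the antisymmetry of $R_j$ on $L^q$--$L^{q'}$ pairs; this is standard and not a genuine obstruction.
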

\begin{proof}
We explain the guidelines of the proof, by referring the reader to Appendix \ref{sec:A} for the missing details. We first prove 
\begin{equation}
\label{filipponecas}
\|f_{x_j}\|_{\mathcal{D}^{\beta-1,q}(B)}\le C\,\left(\|f\|_{L^q(B)}+[f]_{W^{\beta,q}(B)}\right),\qquad j=1,\dots,N.
\end{equation}
This estimate says that the linear operator
\[
T_j:W^{\beta,q}(B)\to \mathcal{D}^{\beta-1,q}(B),
\] 
defined by the weak $j-$th derivative is continuous. It is easy to see that $T_j$ is continuous as an operator defined on $W^{1,q}(B)$ and $L^q(B)$. More precisely, the following operators
\[
\begin{array}{clll}
T_j:&W^{1,q}(B)&\to& L^q(B)\\
&&&\\
T_j:&L^q(B)&\to& \mathcal{D}^{-1,q}(B)
\end{array}
\]
are linear and continuous. In other words, inequality \eqref{filipponecas} is true for the extremal cases $\beta=0$ and $\beta=1$. We observe that $W^{\beta,q}(B)$ is an interpolation space between $W^{1,q}(B)$ and $L^q(B)$, i.e.
\[
W^{\beta,q}(B)=\left(L^q(B),W^{1,q}(B)\right)_{\beta,q},
\] 
see Definition \ref{defi:inter} for the notation. We can then obtain quite easily that $T_j$ is continuous from $W^{\beta,q}(B)$ to the interpolation space between $\mathcal{D}^{-1,q}(B)$ and $L^q(B)$, i.e. 
\[
\left(\mathcal{D}^{-1,q}(B),L^q(B)\right)_{\beta,q},
\] 
see Lemma \ref{lm:3}. Such a space can be computed explicitely: as one may expect, it coincides with the dual Sobolev-Slobodecki\u{\i} space $\mathcal{D}^{\beta-1,q}(B)$ (see Lemma \ref{lm:4}). This proves inequality \eqref{filipponecas}.
\par
In order to get \eqref{filipponecasSI} and conclude, it is now sufficient to use a standard scaling argument.
Let us assume for simplicity that $B$ is centered at the origin, for $f\in W^{\beta,q}(B)$ and every $\lambda>0$, we define $f_\lambda(x)=f(x/\lambda)$. This belongs to $W^{\beta,q}(\lambda\,B)$, then from \eqref{filipponecas} and the scaling properties of the norms, we get
\[
\begin{split}
\lambda^{\frac{N}{q}-\beta}\,\|f_{x_j}\|_{\mathcal{D}^{\beta-1,q}(B)}=\|(f_\lambda)_{x_j}\|_{\mathcal{D}^{\beta-1,q}(\lambda\,B)}&\le C\,\left(\|f_\lambda\|_{L^q(\lambda\,B)}+[f_\lambda]_{W^{\beta,q}(\lambda\,B)}\right)\\
&=C\,\left(\lambda^\frac{N}{q}\,\|f\|_{L^q(B)}+\lambda^{\frac{N}{q}-\beta}\,[f]_{W^{\beta,q}(B)}\right).
\end{split}
\]
If we multiply by $\lambda^{\beta-N/q}$ and then let $\lambda$ go to $0$, we finally get the desired estimate.
\end{proof}

\section{Estimates for a regularized problem}
\label{sec:3}

Let $U$ and $f$ be as in the statement of Theorem \ref{teo:local}. Let $B\Subset \Omega$ be an open ball, for every $\varepsilon>0$ we consider the problem
\begin{equation}
\label{approssimato}
\left\{\begin{array}{rcll}
-\mathrm{div}\nabla G_\varepsilon(\nabla u)&=&f_\varepsilon,&\mbox{ in } B,\\
u&=&U,&\mbox{ on }\partial B,
\end{array}
\right.
\end{equation}
where:
\begin{itemize}
\item $G_\varepsilon(z)=\dfrac{1}{p}\,(\varepsilon+|z|^2)^\frac{p}{2}$, for every $z\in\mathbb{R}^N$;
\vskip.2cm
\item $f_\varepsilon=f\ast \varrho_\varepsilon$ and $\{\varrho_\varepsilon\}_{\varepsilon>0}$ is a family of standard compactly supported $C^\infty$ mollifiers.
\vskip.2cm
\end{itemize} 
Problem \eqref{approssimato} admits a unique solution $u_\varepsilon\in W^{1,p}(B)$, which is locally smooth in $B$ by standard elliptic regularity.

\begin{prop}[Uniform energy estimate]
With the notation above, we have
\begin{equation}
\label{epsilon}
\begin{split}
\int_B |\nabla u_\varepsilon|^p\,dx&\le C\,\varepsilon^\frac{p-1}{2}\,\int_B|\nabla U|\,dx+C\,\int_B |\nabla U|^{p}\,dx+C\,|B|^\frac{p'}{N}\,\|f_\varepsilon\|_{L^{p'}(B)}^{p'},
\end{split}
\end{equation}
for some $C=C(N,p)>0$.
\end{prop}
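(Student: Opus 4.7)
The plan is to exploit the boundary condition in \eqref{approssimato}: since $u_\varepsilon = U$ on $\partial B$, the function $\varphi := u_\varepsilon - U$ belongs to $W^{1,p}_0(B)$ and is therefore admissible as a test function in the weak formulation of the regularized equation. This yields
\[
\int_B \langle \nabla G_\varepsilon(\nabla u_\varepsilon),\nabla u_\varepsilon\rangle\,dx \;=\; \int_B \langle \nabla G_\varepsilon(\nabla u_\varepsilon),\nabla U\rangle\,dx + \int_B f_\varepsilon\,(u_\varepsilon - U)\,dx.
\]
The pointwise identity $\langle \nabla G_\varepsilon(z),z\rangle=(\varepsilon+|z|^2)^{(p-2)/2}|z|^2\ge |z|^p$ bounds the left-hand side from below by $\int_B|\nabla u_\varepsilon|^p\,dx$, so the remaining task is to estimate the two terms on the right.

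For the mixed term I would use
\[
|\langle\nabla G_\varepsilon(\nabla u_\varepsilon),\nabla U\rangle|\le (\varepsilon+|\nabla u_\varepsilon|^2)^{(p-1)/2}\,|\nabla U|,
\]
and then the key splitting
\[
(\varepsilon+|\nabla u_\varepsilon|^2)^{(p-1)/2}\le C(p)\left(\varepsilon^{(p-1)/2}+|\nabla u_\varepsilon|^{p-1}\right),
\]
which follows from the elementary $(a+b)^r\le C(r)(a^r+b^r)$ applied with $r=(p-1)/2$. The first piece produces exactly the term $C\,\varepsilon^{(p-1)/2}\int_B|\nabla U|\,dx$ appearing in \eqref{epsilon}. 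On the second piece I would apply Young's inequality of the form $|\nabla u_\varepsilon|^{p-1}|\nabla U|\le \delta\,|\nabla u_\varepsilon|^p+C_\delta\,|\nabla U|^p$, which contributes to $\int_B|\nabla U|^p\,dx$ together with a small multiple of $\int_B|\nabla u_\varepsilon|^p\,dx$ that, for $\delta$ sufficiently small, can be absorbed on the left.

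For the source term, the H\"older inequality combined with the Poincar\'e inequality for $u_\varepsilon - U\in W^{1,p}_0(B)$ gives
\[
\left|\int_B f_\varepsilon\,(u_\varepsilon - U)\,dx\right|\le C\,|B|^{1/N}\,\|f_\varepsilon\|_{L^{p'}(B)}\,\left(\|\nabla u_\varepsilon\|_{L^p(B)}+\|\nabla U\|_{L^p(B)}\right),
\]
and a final Young inequality produces the $|B|^{p'/N}\|f_\varepsilon\|_{L^{p'}(B)}^{p'}$ contribution plus a further small $\|\nabla u_\varepsilon\|_{L^p(B)}^p$ piece to be absorbed on the left and a $\|\nabla U\|_{L^p(B)}^p$ piece that merges with the one already generated. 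No single step is technically hard; the only real choice is to split $(\varepsilon+|\nabla u_\varepsilon|^2)^{(p-1)/2}$ via the pair $\varepsilon^{(p-1)/2}+|\nabla u_\varepsilon|^{p-1}$ rather than working with $(\varepsilon+|\nabla u_\varepsilon|^2)^{p/2}$: the latter would pollute the estimate with a superfluous $|B|\,\varepsilon^{p/2}$ remainder instead of producing the slim, $U$-weighted error term $\varepsilon^{(p-1)/2}\int_B|\nabla U|\,dx$ that is stated in \eqref{epsilon}.
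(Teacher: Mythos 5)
Your proposal is correct and follows essentially the same route as the paper: test the weak formulation with $u_\varepsilon-U\in W^{1,p}_0(B)$, bound the left-hand side below by $\int_B|\nabla u_\varepsilon|^p\,dx$, split the mixed term into the pieces $\varepsilon^{(p-1)/2}|\nabla U|$ and $|\nabla u_\varepsilon|^{p-1}|\nabla U|$ (the paper does this through $|\nabla G_\varepsilon(z)|\le C(\varepsilon^{(p-2)/2}|z|+|z|^{p-1})$, which is the same estimate), absorb via Young, and treat the source term with H\"older, Poincar\'e and a final Young inequality. No gap to report.
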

\begin{proof}
The proof is standard, we include it for completeness. We take the weak formulation of \eqref{approssimato}
\[
\int \langle \nabla G_\varepsilon(\nabla u_\varepsilon),\nabla\varphi\rangle\,dx=\int f_\varepsilon\,\varphi\,dx,\qquad \mbox{ for every }\varphi\in W^{1,p}_0(B),
\]
and insert the test function $\varphi=u_\varepsilon-U$. This gives
\[
\begin{split}
\int_B \langle \nabla G_\varepsilon(\nabla u_\varepsilon),\nabla u_\varepsilon\rangle\,dx&=\int_B \langle \nabla G_\varepsilon(\nabla u_\varepsilon),\nabla U\rangle\,dx+\int_B f_\varepsilon\,(u_\varepsilon-U)\,dx\\
&\le \int |\nabla G_\varepsilon(\nabla u_\varepsilon)|\,|\nabla U|\,dx+\|f_\varepsilon\|_{L^{p'}(B)}\,\|u_\varepsilon-U\|_{L^p(B)}.
\end{split}
\]
We then observe that 
\[
\langle\nabla G_\varepsilon(z),z\rangle\ge |z|^p,\qquad z\in\mathbb{R}^N,
\]
and
\[
|\nabla G_\varepsilon(z)|\le C\, \left(\varepsilon^\frac{p-2}{2}\,|z|+|z|^{p-1}\right)\le C\,\varepsilon^\frac{p-1}{2}+2\,C\,|z|^{p-1},\qquad z\in\mathbb{R}^N,
\]
for some $C=C(p)>0$. By using these inequalities, we get
\[
\begin{split}
\int_B |\nabla u_\varepsilon|^p\,dx&\le C\,\varepsilon^\frac{p-1}{2}\int_B|\nabla U|\,dx+C\int_B |\nabla u_\varepsilon|^{p-1}\,|\nabla U|\,dx+\|f_\varepsilon\|_{L^{p'}(B)}\,\|u_\varepsilon-U\|_{L^p(B)}\\
&\le C\,\varepsilon^\frac{p-1}{2}\,\int_B|\nabla U|\,dx+C\,\tau\,\int_B |\nabla u_\varepsilon|^{p}\,dx\\
&+C\,\tau^{-\frac{1}{p-1}}\,\int_B |\nabla U|^{p}\,dx+C\,|B|^\frac{1}{N}\,\|f_\varepsilon\|_{L^{p'}(B)}\,\|\nabla u_\varepsilon-\nabla U\|_{L^p(B)},
\end{split}
\]
where we used Young's inequality in the second term and Poincar\'e's inequality in the last one. By taking $\tau>0$ sufficiently small, we can then obtain
\[
\begin{split}
\int_B |\nabla u_\varepsilon|^p\,dx
&\le C\,\varepsilon^\frac{p-1}{2}\,\int_B|\nabla U|\,dx+C\,\int_B |\nabla U|^{p}\,dx\\
&+C\,|B|^\frac{1}{N}\,\|f_\varepsilon\|_{L^{p'}(B)}\,\|\nabla u_\varepsilon-\nabla U\|_{L^p(B)}.
\end{split}
\]
We can now use the triangle inequality on the last term and conclude with a further application of Young's inequality, in order to absorb the term containing $\nabla u_\varepsilon$. We leave the details to the reader.
\end{proof}
The proof of Theorem \ref{teo:local} is crucially based on the following
\begin{prop}[Uniform Sobolev estimate]
\label{prop:sobolevuniforme}
We set
\[
\mathcal{V}_\varepsilon:=V(\nabla u_\varepsilon)=|\nabla u_\varepsilon|^\frac{p-2}{2}\,\nabla u_\varepsilon.
\]
Let $(p-2)/p<s\le 1$, for every pair of concentric balls $B_r\Subset B_R\Subset B$ and every $j=1,\dots,N$ we have
\begin{equation}
\label{Vepsilon}
\begin{split}
\int_{B_r} \left|\left(\mathcal{V}_\varepsilon\right)_{x_j}\right|^2\,dx&\le \frac{C}{(R-r)^2}\int_{B_R} (\varepsilon+|\nabla u_\varepsilon|^2)^\frac{p}{2}\,dx+C\left(R^{\left(s-\frac{p-2}{p}\right)}\,[f_\varepsilon]_{W^{s,p'}(B_R)}\right)^{p'},
\end{split}
\end{equation}
for a constant $C=C(N,p,s)>0$ which blows up as $s\searrow (p-2)/p$.
\end{prop}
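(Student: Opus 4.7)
The plan is to carry out the formal a priori argument sketched in the Introduction, now rigorously on the smooth approximations $u_\varepsilon$. Since $u_\varepsilon\in C^\infty(B)$ by standard elliptic regularity for the non-degenerate problem \eqref{approssimato}, I first differentiate the equation with respect to $x_j$, getting a linear elliptic equation for $(u_\varepsilon)_{x_j}$ with leading coefficient $D^2 G_\varepsilon(\nabla u_\varepsilon)$ and right-hand side $(f_\varepsilon)_{x_j}$, and test it against $\eta^2(u_\varepsilon)_{x_j}$ for a standard cut-off $\eta\in C^\infty_0(B_R)$ with $\eta\equiv 1$ on $B_r$ and $|\nabla\eta|\le C/(R-r)$. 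Using the pointwise bound $|(\mathcal V_\varepsilon)_{x_j}|^2\le C\,\langle D^2 G_\varepsilon(\nabla u_\varepsilon)\nabla(u_\varepsilon)_{x_j},\nabla(u_\varepsilon)_{x_j}\rangle$, Cauchy--Schwarz with respect to the symmetric form $D^2 G_\varepsilon$, and Young's inequality to absorb the cross term generated by $\nabla\eta$, this step produces
\[
\int_{B_r}|(\mathcal V_\varepsilon)_{x_j}|^2\,dx\le \frac{C}{(R-r)^2}\int_{B_R}(\varepsilon+|\nabla u_\varepsilon|^2)^{p/2}\,dx+ C\,\Big|\int (f_\varepsilon)_{x_j}\,\eta^2\,(u_\varepsilon)_{x_j}\,dx\Big|.
\]

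The crux is to estimate the last integral \emph{without} integrating by parts once more (which would bring in second derivatives of $u_\varepsilon$). Instead I use the fractional duality between $\mathcal D^{s-1,p'}(B_R)$ and $\mathcal D^{1-s,p}_0(B_R)$: since $\eta^2(u_\varepsilon)_{x_j}\in C^\infty_0(B_R)$,
\[
\Big|\int (f_\varepsilon)_{x_j}\,\eta^2(u_\varepsilon)_{x_j}\,dx\Big|\le \|(f_\varepsilon)_{x_j}\|_{\mathcal D^{s-1,p'}(B_R)}\cdot\|\eta^2(u_\varepsilon)_{x_j}\|_{\mathcal D^{1-s,p}_0(B_R)},
\]
and Theorem~\ref{teo:filipponecas} (together with a simple Poincar\'e-type bound via Remark~\ref{oss:poincare} to drop the $L^{p'}$ term in \eqref{filipponecas}) controls the first factor by $C\,[f_\varepsilon]_{W^{s,p'}(B_R)}$ up to an appropriate power of $R$. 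The endpoint case $s=1$ is settled by ordinary integration by parts and H\"older, and is strictly easier.

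For the second factor I extend $\eta^2(u_\varepsilon)_{x_j}$ by zero to $\mathbb R^N$ and apply Proposition~\ref{prop:lostinpassing} with $\alpha=1-s$ and $\beta=2/p$---the hypotheses $\alpha<\beta<1$ being precisely the standing assumption $s>(p-2)/p$---to interpolate its $W^{1-s,p}$-seminorm between its Nikol'ski\u{\i} seminorm of exponent $2/p$ and its $L^p$ norm. The Nikol'ski\u{\i} seminorm is handled through the discrete Leibniz identity $\delta_h(\eta^2 (u_\varepsilon)_{x_j})=\eta^2(\cdot+h)\,\delta_h(u_\varepsilon)_{x_j}+\delta_h(\eta^2)\,(u_\varepsilon)_{x_j}$: the first summand is controlled, via inequality \eqref{1} of Lemma~2.1 (giving $|\delta_h(u_\varepsilon)_{x_j}|^p\le C|\delta_h\mathcal V_\varepsilon|^2$), by $C\int|\nabla\mathcal V_\varepsilon|^2$ on a slightly enlarged ball; the second is controlled, via the Lipschitz bound $|\delta_h\eta^2|\le C\min(|h|/(R-r),1)$, by $C(R-r)^{-2}\int_{B_R}|\nabla u_\varepsilon|^p$. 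The $L^p$ factor is trivially bounded by $\int_{B_R}|\nabla u_\varepsilon|^p$.

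Collecting these bounds and applying a generalised Young inequality with exponents $(p',\,p/\theta,\,p/(1-\theta))$ for $\theta=(1-s)p/2\in(0,1)$, the Nikol'ski\u{\i} contribution containing $\int|\nabla\mathcal V_\varepsilon|^2$ can be absorbed into the left-hand side by a standard hole-filling iteration between $B_r$ and a slightly larger concentric ball, and the remaining terms produce exactly the two summands in \eqref{Vepsilon}. The principal technical obstacle is the scaling bookkeeping: the prefactor $R^{s-(p-2)/p}$ on the $[f_\varepsilon]_{W^{s,p'}}$-term is dimensionally forced (one checks this by plugging $u(\lambda\cdot)$ into both sides of \eqref{Vepsilon}) and has to be extracted from Remark~\ref{oss:poincare} applied to the $L^p$ factor in the interpolation. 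The blow-up of the constant as $s\searrow(p-2)/p$ is precisely the factor $1/(\beta-\alpha)$ in Proposition~\ref{prop:lostinpassing}, consistent with the borderline nature of the hypothesis on~$f$.
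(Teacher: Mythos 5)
Your overall skeleton is the same as the paper's: differentiate the regularized equation, test with $\zeta^2 (u_\varepsilon)_{x_j}$, use the ellipticity of $D^2G_\varepsilon$ plus Cauchy--Schwarz and Young to reach a Caccioppoli-type inequality with the residual term $\int (f_\varepsilon)_{x_j}\,\zeta^2 (u_\varepsilon)_{x_j}\,dx$, estimate that term by the duality $\mathcal{D}^{s-1,p'}(B_R)$--$\mathcal{D}^{1-s,p}_0(B_R)$ together with Theorem \ref{teo:filipponecas}, and interpolate the $W^{1-s,p}$ seminorm via Proposition \ref{prop:lostinpassing} with $\alpha=1-s$, $\beta=2/p$. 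This is exactly the paper's route, including the treatment of the endpoint $s=1$ by plain integration by parts and H\"older.

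The one place where you deviate is the estimate of the Nikol'ski\u{\i} seminorm, and as written it has a genuine gap. You split $\delta_h(\zeta^2 (u_\varepsilon)_{x_j})$ by a discrete Leibniz rule and then apply the vector inequality \eqref{1} to $\delta_h\nabla u_\varepsilon$, which produces $\sup_h\int|\delta_h\mathcal{V}_\varepsilon|^2/|h|^2$ and hence the \emph{unweighted, full-gradient} quantity $\int_{B_{R'}}|\nabla\mathcal{V}_\varepsilon|^2\,dx$ on a slightly larger ball. For a fixed direction $j$ this cannot be absorbed: the left-hand side only controls $\int|(\mathcal{V}_\varepsilon)_{x_j}|^2\,\zeta^2\,dx$, i.e.\ the $j$-th derivative with the cutoff weight, while your term contains all directional derivatives and no weight. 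To repair this along your lines you would have to sum the Caccioppoli inequalities over $j=1,\dots,N$ \emph{and} invoke an iteration-over-radii lemma (which you gesture at with ``hole-filling'' but do not justify, and which also forces you to keep all intermediate balls inside $B$ and to treat large $|h|$ separately in the difference-quotient characterization). The paper avoids all of this with a small but decisive trick: it applies the scalar form of \eqref{1} to the product itself, $|\delta_h(u_{x_j}\zeta^2)|^p\le C\,|\delta_h(|u_{x_j}|^{\frac{p-2}{2}}u_{x_j}\,\zeta^p)|^2$, so that the cutoff passes through the nonlinearity; then $|\nabla(|u_{x_j}|^{\frac{p-2}{2}}u_{x_j})|^2\le|\mathcal{V}_{x_j}|^2$ gives back precisely the weighted, single-direction quantity $\int|\mathcal{V}_{x_j}|^2\zeta^2$, and absorption is immediate for each fixed $j$ with no iteration. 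Finally, your claim that the prefactor $R^{s-\frac{p-2}{p}}$ is ``extracted from Remark \ref{oss:poincare}'' is off: in the paper it comes from choosing the weights in the three-term Young inequality as powers of $(R-r)$, using $R-r\le R$, and quoting the scaling-clean form \eqref{filipponecasSI} of the negative-norm theorem (whose lower-order term was already removed by scaling, not by Poincar\'e).
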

\begin{proof}
In what follows, for notational simplicity we omit to indicate the dependence on $\varepsilon>0$ and simply write
\[
G,\quad u,\quad \mathcal{V}\quad \mbox{ and }\quad f.
\]
For $j\in\{1,\dots,N\}$, in the weak formulation of \eqref{approssimato} we insert a test function of the form $\varphi_{x_j}$ for $\varphi\in C^\infty_0(B)$. After an integration by parts, we obtain
\[
\int \langle D^2 G(\nabla u)\,\nabla u_{x_j},\nabla \varphi\rangle\,dx=\int f_{x_j}\,\varphi\,dx.
\]
By density, this relation remains true for $\varphi\in W^{1,p}$ with compact support in $B$. We then insert the test function
\[
\varphi=\zeta^2\, u_{x_j},
\]
with $\zeta\in C^\infty_0(B_R)$ a standard cut-off function such that
\begin{equation}
\label{zzeta}
0\le \zeta\le 1,\qquad \zeta\equiv 1\ \mbox{ in } B_r,\qquad |\nabla \zeta|\le \frac{C}{R-r}.
\end{equation}
Thus we obtain
\begin{equation}
\label{apparecchio}
\begin{split}
\int \langle D^2 G(\nabla u)\,\nabla u_{x_j},\nabla u_{x_j}\rangle\,\zeta^2\,dx&=-2\,\int\langle D^2 G(\nabla u)\,\nabla u_{x_j},\nabla \zeta\rangle\,u_{x_j}\,\zeta\,dx\\
&+ \int f_{x_j}\,u_{x_j}\,\zeta^2\,dx.
\end{split}
\end{equation}
We first observe that the left-hand side is positive, since $G$ is convex. As for the right-hand side, we have
\[
\begin{split}
\int \langle D^2 G(\nabla u)\,\nabla u_{x_j},\nabla \zeta\rangle\,u_{x_j}\,\zeta\,dx
&\le \int \left|\langle D^2 G(\nabla u)\,\nabla u_{x_j},\nabla \zeta\rangle\right|\,|u_{x_j}|\,\zeta\,dx\\
&\le \int \sqrt{\langle D^2 G(\nabla u)\,\nabla u_{x_j},\nabla u_{x_j}\rangle}\\
&\times\sqrt{\langle D^2 G(\nabla u)\,\nabla \zeta,\nabla \zeta\rangle} \,|u_{x_j}|\,\zeta\,dx
\end{split},
\]
thanks to Cauchy-Schwartz inequality. By using Young's inequality in a standard fashion, from \eqref{apparecchio} and the previous inequality we can obtain
\[
\begin{split}
\int \langle D^2 G(\nabla u)\,\nabla u_{x_j},\nabla u_{x_j}\rangle\,\zeta^2\,dx&\le 4\,\int\langle D^2 G(\nabla u)\,\nabla\zeta,\nabla \zeta\rangle\,|u_{x_j}|^2\,dx\\
&+ 2\,\int f_{x_j}\,u_{x_j}\,\zeta^2\,dx.
\end{split}
\]
We now use that
\[
|z|^{p-2}\,|\xi|^2\le \langle D^2 G(z)\,\xi,\xi\rangle\le (p-1)\,(\varepsilon+|z|^2)^\frac{p-2}{2}\,|\xi|^2,\qquad z,\xi\in\mathbb{R}^N,
\]
thus with simple manipulations we get
\[
\begin{split}
\int |\nabla u|^{p-2}\,|\nabla u_{x_j}|^2\,\zeta^2\,dx&\le \frac{4\,(p-1)}{(R-r)^2}\int_{B_R} (\varepsilon+|\nabla u|^2)^\frac{p}{2}\,dx+ 2\,\int f_{x_j}\,u_{x_j}\,\zeta^2\,dx.
\end{split}
\]
We observe that
\begin{equation}
\label{Vona}
|\nabla u|^{p-2}\,|\nabla u_{x_j}|^2=\frac{4}{p^2}\,\left|\left(|\nabla u|^\frac{p-2}{2}\,\nabla u\right)_{x_j}\right|^2=\frac{4}{p^2}\,\left|\mathcal{V}_{x_j}\right|^2,
\end{equation}
thus we have
\begin{equation}
\label{rullo!}
\int \left|\mathcal{V}_{x_j}\right|^2\,\zeta^2\,dx\le \frac{(p-1)\,p^2}{(R-r)^2}\int_{B_R} (\varepsilon+|\nabla u|^2)^\frac{p}{2}\,dx+\frac{p^2}{2}\,\int f_{x_j}\,u_{x_j}\,\zeta^2\,dx.
\end{equation}
We are left with the estimate of the term containing $f_{x_j}$. By definition of dual norm, for\footnote{We exclude here the case $s=1$, since this is the easy case. It would be sufficient to use H\"older's inequality with exponents $p$ and $p'$ to conclude.}  $(p-2)/p<s<1$ we have
\begin{equation}
\label{emmo?}
\int f_{x_j}\,u_{x_j}\,\zeta^2\,dx\le \|f_{x_j}\|_{\mathcal{D}^{s-1,p'}(B_R)}\,[u_{x_j}\,\zeta^2]_{W^{1-s,p}(\mathbb{R}^N)}.
\end{equation}
Observe that by assumption $0<1-s<2/p$, thus by Proposition \ref{prop:lostinpassing} with $\alpha=1-s$ and $\beta=2/p$, we have
\begin{equation}
\label{bleah}
\begin{split}
\left[u_{x_j}\,\zeta^2\right]^p_{W^{1-s,p}(\mathbb{R}^N)}&\le \frac{C}{\left(s-\dfrac{p-2}{p}\right)\,(1-s)}\, \left(\left[u_{x_j}\,\zeta^2\right]_{\mathcal{N}^{\frac{2}{p},p}_\infty(\mathbb{R}^N)}^p\right)^{(1-s)\frac{p}{2}}\,\left(\|u_{x_j}\|^p_{L^p(B_R)}\right)^{1-\frac{p}{2}(1-s)},
\end{split}
\end{equation}
In order to estimate the norm of $u_{x_j}\,\zeta^2$, we recall that
\[
\left[u_{x_j}\,\zeta^2\right]_{\mathcal{N}^{\frac{2}{p},p}_\infty(\mathbb{R}^N)}^p=\sup_{|h|>0} \int_{\mathbb{R}^N} \frac{|\delta_h(u_{x_j}\,\zeta^2)|^p}{|h|^2}\,dx.
\]
Then we observe that from \eqref{1}
\[
|a-b|^p\le C\, \left||a|^\frac{p-2}{2}\,a-|b|^\frac{p-2}{2}\,b\right|^{2},\qquad a,b\in\mathbb{R}.
\]
Thus we obtain
\[
\begin{split}
\int_{\mathbb{R}^N} &\frac{|\delta_h(u_{x_j}\,\zeta^2)|^p}{|h|^2}\,dx\le C\, \int_{\mathbb{R}^N} \frac{\left|\delta_h\left(|u_{x_j}|^\frac{p-2}{2}\,u_{x_j}\,\zeta^p\right)\right|^2}{|h|^2}\,dx\le C\,\int_{\mathbb{R}^N} \left|\nabla \left(|u_{x_j}|^\frac{p-2}{2}\,u_{x_j}\,\zeta^p\right)\right|^2\,dx,
\end{split}
\]
where the second inequality comes from the classical characterization of $W^{1,2}$ in terms of finite differences.
By recalling the properties \eqref{zzeta} of $\zeta$, with simple manipulations we thus obtain
\[
\left[u_{x_j}\,\zeta^2\right]_{\mathcal{N}^{\frac{2}{p},p}_\infty(\mathbb{R}^N)}^p\le C\, \int \left|\nabla \left(|u_{x_j}|^\frac{p-2}{2}\,u_{x_j}\right)\right|^2\,\zeta^2\,dx+\frac{C}{(R-r)^2}\,\int_{B_R} |u_{x_j}|^p\,dx,
\]
for a constant $C=C(N,p)>0$. Observe that
\[
\left|\nabla \left(|u_{x_j}|^\frac{p-2}{2}\,u_{x_j}\right)\right|^2=\frac{p^2}{4}\,|u_{x_j}|^{p-2}\,|\nabla u_{x_j}|^2\le \frac{p^2}{4}\,|\nabla u|^{p-2}\,|\nabla u_{x_j}|^2=|\mathcal{V}_{x_j}|^2,
\]
thanks to \eqref{Vona}. This yields
\[
\left[u_{x_j}\,\zeta^2\right]_{\mathcal{N}^{\frac{2}{p},p}_\infty(\mathbb{R}^N)}^p\le C\, \int |\mathcal{V}_{x_j}|^2\,\zeta^2\,dx+\frac{C}{(R-r)^2}\,\int_{B_R} |\nabla u|^p\,dx.
\]
By inserting this estimate in \eqref{bleah}, from \eqref{emmo?} we get
\[
\begin{split}
\left|\int f_{x_j}\,u_{x_j}\,\zeta^2\,dx\right|&\le C\,\|f_{x_j}\|_{\mathcal{D}^{s-1,p'}(B_R)}\,\left(\|\mathcal{V}_{x_j}\,\zeta\|^\frac{2}{p}_{L^2(B_R)}+\frac{C}{(R-r)^\frac{2}{p}}\,\|\nabla u\|_{L^p(B_R)}\right)^{(1-s)\frac{p}{2}}\\
&\times \left(\|\nabla u\|_{L^p(B_R)}\right)^{1-\frac{p}{2}(1-s)},
\end{split}
\]
for a constant $C=C(N,p,s)>0$, which blows-up as $s\searrow (p-2)/p$. We can still manipulate a bit the previous estimate and obtain
\[
\begin{split}
\left|\int f_{x_j}\,u_{x_j}\,\zeta^2\,dx\right|&\le C\,\|f_{x_j}\|_{\mathcal{D}^{s-1,p'}(B_R)}\,\left(\|\mathcal{V}_{x_j}\,\zeta\|^{1-s}_{L^2(B_R)}+\frac{C}{(R-r)^{1-s}}\,\|\nabla u\|_{L^p(B_R)}^{(1-s)\frac{p}{2}}\right)\\
&\times \left(\|\nabla u\|_{L^p(B_R)}\right)^{1-\frac{p}{2}(1-s)}\\
&\le  C\,\|f_{x_j}\|_{\mathcal{D}^{s-1,p'}(B_R)}\,\|\mathcal{V}_{x_j}\,\zeta\|^{1-s}_{L^2(B_R)}\,\left(\|\nabla u\|_{L^p(B_R)}\right)^{1-\frac{p}{2}(1-s)}\\
&+\frac{C}{(R-r)^{1-s}}\,\|f_{x_j}\|_{\mathcal{D}^{s-1,p'}(B_R)}\,\|\nabla u\|_{L^p(B_R)},
\end{split}
\]
for a different constant $C>0$, still depending on $N,p$ and $s$ only. We now go back to \eqref{rullo!} and use the previous estimate. This gives
\[
\begin{split}
\int \left|\mathcal{V}_{x_j}\right|^2\,\zeta^2\,dx&\le \frac{C}{(R-r)^2}\int_{B_R} (\varepsilon+|\nabla u|^2)^\frac{p}{2}\,dx\\
&+C\,\|f_{x_j}\|_{\mathcal{D}^{s-1,p'}(B_R)}\,\|\mathcal{V}_{x_j}\,\zeta\|^{1-s}_{L^2(B_R)}\,\left(\|\nabla u\|_{L^p(B_R)}\right)^{1-\frac{p}{2}(1-s)}\\
&+\frac{C}{(R-r)^{1-s}}\,\|f_{x_j}\|_{\mathcal{D}^{s-1,p'}(B_R)}\,\|\nabla u\|_{L^p(B_R)}.
\end{split}
\]
We need to absorb the higher order term containing $\mathcal{V}$ in the right-hand side.
For this, we use Young's inequality with exponents
\[
p',\quad \frac{2}{1-s} \quad \mbox{ and }\quad \frac{2\,p}{2-p\,(1-s)},
\]
so to get
\[
\begin{split}
\|f_{x_j}\|_{\mathcal{D}^{s-1,p'}(B_R)}&\|\mathcal{V}_{x_j}\,\zeta\|^{1-s}_{L^2(B_R)}\,\left(\|\nabla u\|_{L^p(B_R)}\right)^{1-\frac{p}{2}(1-s)}\\
&\le C\,\tau^{-\frac{(1-s)}{2}\,p'}\,(R-r)^{\left(s-\frac{p-2}{p}\right)\,p'}\,\|f_{x_j}\|^{p'}_{\mathcal{D}^{s-1,p'}(B_R)}+\tau\,\|\mathcal{V}_{x_j}\,\zeta\|^2_{L^2(B_R)}\\
&+\frac{C}{(R-r)^2}\,\|\nabla u\|_{L^p(B_R)}^p,
\end{split}
\]
which yields (by taking $\tau>0$ small enough)
\[
\begin{split}
\int \left|\mathcal{V}_{x_j}\right|^2\,\zeta^2\,dx&\le \frac{C}{(R-r)^2}\int_{B_R} (\varepsilon+|\nabla u|^2)^\frac{p}{2}\,dx+C\,(R-r)^{\left(s-\frac{p-2}{p}\right)\,p'}\,\|f_{x_j}\|_{\mathcal{D}^{s-1,p'}(B_R)}^{p'}\\
&+\frac{C}{(R-r)^{1-s}}\,\|f_{x_j}\|_{\mathcal{D}^{s-1,p'}(B_R)}\,\|\nabla u\|_{L^p(B_R)}.
\end{split}
\]
We now apply Young's inequality once more and Theorem \ref{teo:filipponecas}, so to obtain \eqref{epsilon}.
\end{proof}

\section{Proof of Theorem \ref{teo:local}}
\label{sec:4}

Let $U\in W^{1,p}_{\rm loc}(\Omega)$ be a local weak solution in $\Omega$ of \eqref{plaplace}. We fix a ball $B_R\Subset\Omega$ and take a pair of concentric ball $B$ and $\widetilde B$ such that $B_R\Subset B\Subset \widetilde B\Subset\Omega$. There exists $\varepsilon_0>0$ such that
\begin{equation}
\label{fepsilon}
\|f_\varepsilon\|_{L^{p'}(B)}+[f_\varepsilon]_{W^{s,p'}(B)}\le \|f\|_{L^{p'}(\widetilde B)}+[f]_{W^{s,p'}(\widetilde B)}<+\infty,\qquad \mbox{ for every }0<\varepsilon<\varepsilon_0.
\end{equation}
We consider for every $0<\varepsilon<\varepsilon_0$ the solution $u_\varepsilon$ of problem \eqref{approssimato} in the ball $B$. By \eqref{epsilon} and \eqref{fepsilon} we know that $\{u_\varepsilon\}_{0<\varepsilon<\varepsilon_0}$ is bounded in $W^{1,p}(B)$, thus by Rellich-Kondra\v{s}ov Theorem we can extract a sequence $\{\varepsilon_k\}_{k\in\mathbb{N}}$ converging to $0$, such that
\[
\lim_{k\to\infty} \|u_{\varepsilon_k}-u\|_{L^p(B)}=0,
\]
for some $u\in W^{1,p}(B)$. Since $u_{\varepsilon_k}$ is the unique solution of
\[
\min\left\{\int_B G_{\varepsilon_k}(\nabla \varphi)\,dx-\int_B f_{\varepsilon_k}\,\varphi\,dx\, :\, \varphi-U\in W^{1,p}_0(B)\right\},
\]
by a standard $\Gamma-$convergence argument we can easily show that $u=U$, i.e. the limit function coincides with our local weak solution $U$. 
\par
From Proposition \ref{prop:sobolevuniforme} and boundedness of $\{u_{\varepsilon_k}\}_{k\in\mathbb{N}}$, we know that $\{\mathcal{V}_{\varepsilon_k}\}_{k\in\mathbb{N}}$ is bounded in $W^{1,2}(B_r;\mathbb{R}^N)$, for every $B_r\Subset B$. Up to passing a subsequence, we can infer convergence to some vector field \(\mathcal{Z}\in W^{1,2}(B_r;\mathbb{R}^N)\), weakly in \(W^{1,2}(B_r;\mathbb{R}^N)\) and strongly in \(L^{2}(B_r;\mathbb{R}^N)\). In particular, this is a Cauchy sequence in $L^2(B_r)$ and by using the elementary inequality \eqref{1},
we obtain that $\{u_{\varepsilon_k}\}_{k\in\mathbb{N}}$ as well is a Cauchy sequence in $W^{1,p}(B_r)$. Thus we obtain 
\[
\lim_{k\to\infty}\left\|\nabla u_{\varepsilon_k}-\nabla U\right\|_{L^p(B_r)}=0.
\]
We need to show that $\mathcal{Z}=|\nabla U|^{(p-2)/2}\,U$. We use the elementary inequality \eqref{3}, this yields
\[
\begin{split}
\int_{B_r} \left||\nabla u_{\varepsilon_k}|^{\frac{p-2}{2}}\,\nabla u_{\varepsilon_k}-|\nabla U|^{\frac{p-2}{2}}\,\nabla U\right|^2\,dx&\le C\, \int_{B_r} \left(|\nabla u_{\varepsilon_k}|^{\frac{p-2}{2}}+|\nabla U|^{\frac{p-2}{2}}\right)^2\,|\nabla u^{\varepsilon_k}-\nabla U|^2\,dx\\
&\le C\,\left( \int_{B_r} \left(|\nabla u_{\varepsilon_k}|^{\frac{p-2}{2}}+|\nabla U|^{\frac{p-2}{2}}\right)^\frac{2\,p}{p-2}\,dx\right)^\frac{p-2}{p}\\
&\times \left(\int_{B_r} |\nabla u_{\varepsilon_k}-\nabla U|^p\,dx\right)^\frac{2}{p}.
\end{split}
\]
By using the strong convergence of the gradients proved above, this gives that $\mathcal{Z}=|\nabla U|^\frac{p-2}{2}\,\nabla U$ and it belongs to $W^{1,2}(B_r;\mathbb{R}^N)$. By arbitrariness of the ball $B_r\Subset B$ in the above discussion, we can now take the ball $B_R$ fixed at the beginning and pass to the limit in \eqref{Vepsilon}, so to obtain the desired estimate \eqref{filipposobolev}.
\par
Finally, the fact that $\nabla U\in W^{\tau,p}_{\rm loc}(\Omega;\mathbb{R}^N)$ follows in standard way from the elementary inequality \eqref{1}. We leave the details to the reader.

\section{An example}
\label{sec:5}
We now show with an explicit example that the assumption on $f$ in Theorem \ref{teo:local} is essentially sharp.
\par 
Let us take $U(x)=|x|^{-\alpha}$, which belongs to $W^{1,p}_{\rm loc}(\mathbb{R}^N)$ if and only if
\[
\alpha<\frac{N}{p}-1.
\]
Then we can compute
\[
|\nabla U|^{p-2}\,\nabla U\simeq |x|^{-(\alpha+1)\,(p-1)-1}\,x,\qquad
\mathrm{div}\left(|\nabla U|^{p-2}\,\nabla U\right)\simeq |x|^{-(\alpha+1)\,(p-1)-1},
\]
and
\[
|\nabla U|^\frac{p-2}{2}\,\nabla U\simeq |x|^{-(\alpha+1)\,\frac{p}{2}-1}\,x.
\]
We observe that
\[
|\nabla U|^\frac{p-2}{2}\,\nabla U\in W^{1,2}_{\rm loc}(\mathbb{R}^N)\quad \Longleftrightarrow\quad \alpha<\frac{N-2}{p}-1=:\widetilde\alpha.
\]
On the other hand the function $f(x)=|x|^{-(\alpha+1)\,(p-1)-1}$ belongs to $W^{s,p'}_{\rm loc}(\mathbb{R}^N)$ if 
\[
(\alpha+1)\,(p-1)+1<\frac{N-s\,p'}{p'}\qquad \mbox{ i.\,e. if }\qquad \alpha<\frac{N}{p}-\frac{s+1}{p-1}-1=:\alpha_s.
\]
If we take $s<(p-2)/p$, we have
\[
\alpha_s>\frac{N-2}{p}-1=\widetilde\alpha,
\]
thus for every such $s$, if we take $\alpha=(N-2)/p-1$ we get
\[
-\Delta_p U=f\in W^{s,p'}_{\rm loc}(\mathbb{R}^N) \qquad \mbox{ and }\qquad |\nabla U|^\frac{p-2}{2}\, \nabla U\not\in W^{1,2}_{\rm loc}(\mathbb{R}^N).
\]
This shows that in Theorem \ref{teo:local} the differentiability exponent $s$ of $f$ can not go below $(p-2)/p$.

\appendix

\section{Tools for the proof of Theorem \ref{teo:filipponecas}}
\label{sec:A}

\subsection{Basics of real interpolation: the $K-$method}
We first present a couple of basic facts from the theory of real interpolation, by referring the reader to \cite{BL} for more details.
\begin{defi}
Let $X$ and $Y$ two Banach spaces over $\mathbb{R}$. We say that $(X,Y)$ is a {\it compatible couple} is there exists a Hausdorff topological vector space $Z$ such that $X$ and $Y$ are continuously embedded in $Z$.
\end{defi}
When $(X,Y)$ is a compatible couple, it does make sense to consider the two spaces $X\cap Y$ and $X+Y$. Thus we can give the definition of interpolation space.
\begin{defi}
\label{defi:inter}
Let $(X,Y)$ be a compatible couple of Banach spaces over $\mathbb{R}$. For every $u\in X+Y$ and $t>0$ we define
\[
K(t,u,X,Y)=\inf_{u_1\in X,\, u_2\in Y} \Big\{\|u_1\|_X+t\,\|u_2\|_Y\, :\, u=u_1+u_2\Big\}.
\]
For $0<\alpha<1$ and $1<q<\infty$, the {\it interpolation space} $(X,Y)_{\alpha,q}$ consists of
\[
(X,Y)_{\alpha,q}=\left\{u\in X+Y\, :\, \int_0^{+\infty} t^{-\alpha\,q}\,K(t,u,X,Y)^q\,\frac{dt}{t}<+\infty\right\}.
\]
This is a Banach space with the norm
\[
\|u\|_{(X,Y)_{\alpha,q}}=\left(\int_0^{+\infty} t^{-\alpha\,q}\,K(t,u,X,Y)^q\,\frac{dt}{t}\right)^\frac{1}{q}.
\]
\end{defi}
The following result is well-known.
\begin{lemma}
\label{lm:1}
Let $0<\beta<1$ and $1<q<\infty$. For every $B\subset\mathbb{R}^N$ open ball, we have
\[
W^{\beta,q}(B)=\left(L^q(B),W^{1,q}(B)\right)_{\beta,q},
\]
and there exists a constant $C=C(N,\beta,q)>1$ such that
\[
\frac{1}{C}\,\|u\|_{\left(L^q(B),W^{1,q}(B)\right)_{\beta,q}} \le \|u\|_{W^{\beta,q}(B)}\le C\, \|u\|_{\left(L^q(B),W^{1,q}(B)\right)_{\beta,q}}
\]
\end{lemma}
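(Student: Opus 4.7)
The plan is to establish both continuous embeddings by reducing to the case $B=\mathbb{R}^N$ and then identifying each norm with an integrated $L^q$-modulus of continuity.

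\textbf{Reduction to $\mathbb{R}^N$.} Since $B$ is a ball (in particular has Lipschitz boundary), there exists a linear extension operator $E$ that is simultaneously continuous from $L^q(B)$ to $L^q(\mathbb{R}^N)$ and from $W^{1,q}(B)$ to $W^{1,q}(\mathbb{R}^N)$ (for instance Stein's reflection extension, which is moreover continuous from $W^{\beta,q}(B)$ to $W^{\beta,q}(\mathbb{R}^N)$). By the functoriality of real interpolation, $E$ is then continuous from $\bigl(L^q(B),W^{1,q}(B)\bigr)_{\beta,q}$ into $\bigl(L^q(\mathbb{R}^N),W^{1,q}(\mathbb{R}^N)\bigr)_{\beta,q}$. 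The restriction operator is trivially continuous in the reverse direction on all three scales, so it suffices to prove the norm equivalence on $\mathbb{R}^N$.

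\textbf{Easy direction.} For any admissible decomposition $u=u_1+u_2$ with $u_1\in L^q(\mathbb{R}^N)$ and $u_2\in W^{1,q}(\mathbb{R}^N)$, one has
\[
\|\delta_h u\|_{L^q(\mathbb{R}^N)}\le 2\,\|u_1\|_{L^q(\mathbb{R}^N)}+|h|\,\|\nabla u_2\|_{L^q(\mathbb{R}^N)},
\]
so infimizing gives $\|\delta_h u\|_{L^q}\le 2\,K(|h|,u,L^q,W^{1,q})$. Switching to polar coordinates in
\[
[u]_{W^{\beta,q}(\mathbb{R}^N)}^q=\int_{\mathbb{R}^N}\frac{\|\delta_h u\|_{L^q}^q}{|h|^{N+\beta q}}\,dh
\]
then bounds it above by a constant times $\int_0^{+\infty}t^{-\beta q}\,K(t,u,L^q,W^{1,q})^q\,dt/t$, which is exactly the interpolation norm.

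\textbf{Hard direction.} Pick $\rho\in C^\infty_c(B_1)$ with $\int\rho=1$, set $\rho_t(x)=t^{-N}\rho(x/t)$, and for each $t>0$ split $u=(u-u\ast\rho_t)+u\ast\rho_t$. Using $\int\rho_t=1$ and $\int\nabla\rho_t=0$, and writing each convolution as an integral of translates, one obtains
\[
\|u-u\ast\rho_t\|_{L^q}\le C\,\omega_q(t,u),\qquad \|\nabla(u\ast\rho_t)\|_{L^q}\le C\,t^{-1}\,\omega_q(t,u),
\]
with $\omega_q(t,u):=\sup_{|h|\le t}\|\delta_h u\|_{L^q}$, so that $K(t,u,L^q,W^{1,q})\lesssim \omega_q(t,u)+\min(t,1)\,\|u\|_{L^q}$. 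The conclusion then follows from the classical equivalence
\[
\int_0^{+\infty}t^{-\beta q}\,\omega_q(t,u)^q\,\frac{dt}{t}\sim [u]_{W^{\beta,q}(\mathbb{R}^N)}^q,
\]
established by dyadic decomposition: monotonicity of $t\mapsto\omega_q(t,u)$ together with a Fubini/Hardy rearrangement compare $\omega_q(2^k,u)^q$ with a spherical average of $\|\delta_{h}u\|_{L^q}^q$ over $|h|\sim 2^k$, which reassembles to the Slobodecki\u{\i} double integral via polar coordinates.

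\textbf{Main obstacle.} The delicate step is exactly this last equivalence between the sup-modulus $\omega_q$ and the integrated Slobodecki\u{\i} seminorm: $\omega_q(t,u)$ is defined as a supremum, whereas the Slobodecki\u{\i} norm integrates over $h$. This is where the hypothesis $0<\beta<1$ is essential (at the endpoints the weight becomes non-integrable), and it is the only point of the argument that goes beyond routine bookkeeping; the extension operator and the mollification split are standard ingredients.
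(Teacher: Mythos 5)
The paper does not actually prove this lemma: it simply refers to Tartar \cite{Ta}, so any complete argument you give is ``new'' relative to the text. Your outline (extension to $\mathbb{R}^N$, the trivial splitting for the easy inequality, mollification at scale $t$ for the hard one) is indeed the classical route, and it is close in spirit to what the authors do by hand in the appendix for the zero-trace analogue: in the proof of Lemma \ref{lm:4} they bound $K(t,u)$ by mollifying at scale $t$ (combined with a dilation to respect the boundary, which is their substitute for your extension operator). Two points in your write-up need attention, one of them a genuine gap.

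First, the step you yourself single out as the main obstacle is mis-justified. Monotonicity of $t\mapsto\omega_q(t,u)$ plus Fubini only gives the easy comparison (an average of $\|\delta_h u\|_{L^q}^q$ over $|h|\sim t$ is at most $\omega_q(Ct,u)^q$); what you need is the reverse bound $\omega_q(t,u)^q\lesssim t^{-N}\int_{|k|\le 2t}\|\delta_k u\|_{L^q}^q\,dk$, and for that one must use the almost-subadditivity coming from the identity $\delta_h u=\delta_{h+k}u-(\delta_k u)(\cdot+h)$, averaged in $k$ over a ball of radius $t$. Alternatively — and more simply — you can bypass the sup-modulus altogether: by Jensen's inequality, $\|u-u\ast\rho_t\|_{L^q}^q$ and $t^q\|u\ast\nabla\rho_t\|_{L^q}^q$ are bounded by averages of $\|\delta_y u\|_{L^q}^q$ over $|y|\le t$ (using $\int\rho_t=1$, $\int\nabla\rho_t=0$), so that $\int_0^\infty t^{-\beta q}K(t,u)^q\,dt/t\lesssim [u]_{W^{\beta,q}}^q+\|u\|_{L^q}^q$ follows from Fubini alone; this is exactly the computation carried out in the paper's proof of Lemma \ref{lm:4}, and with it your ``delicate step'' disappears. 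Second, a caution rather than an error: the fractional boundedness $E:W^{\beta,q}(B)\to W^{\beta,q}(\mathbb{R}^N)$ that your reduction uses must be taken from a directly proven extension theorem for Lipschitz domains (such results exist, e.g.\ by reflection for a ball or the extension theorem of Di Nezza--Palatucci--Valdinoci), not deduced by interpolating the $L^q$ and $W^{1,q}$ bounds — the latter would be circular, since the identity $(L^q,W^{1,q})_{\beta,q}=W^{\beta,q}$ on the domain is precisely what you are proving.
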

\begin{proof}
This is classical, a proof can be found in \cite[Chapters 34 \& 36]{Ta}.
\end{proof}

\subsection{Weak derivatives}

For $j\in\{1,\dots,N\}$, let us consider the linear operator $T_j$ defined by
the distributional $j-$th partial derivative, i.e.
\[
\langle T_j(F),\varphi\rangle=-\langle F,\varphi_{x_j}\rangle
\]
for every test function $\varphi$ and every distribution $F$. We first recall the following fact, whose proof is straightforward.
\begin{lemma}
Let $1<q<\infty$ and $j\in \{1,\dots,N\}$, the linear operators 
\[
T_j:W^{1,q}(B)\to L^q(B)\qquad \mbox{ and }\qquad T_j:L^q(B)\to \mathcal{D}^{-1,q}(B),
\] 
are continuous on their domains. More precisely, we have
\begin{equation}
\label{C1}
\|T_j(u)\|_{L^q(B)}\le \|u\|_{W^{1,q}(B)},\qquad \mbox{ for every }u\in W^{1,q}(B),
\end{equation}
and
\begin{equation}
\label{C2}
\|T_j(u)\|_{\mathcal{D}^{-1,q}(B)}\le \|u\|_{L^q(B)},\qquad \mbox{ for every }u\in L^{q}(B).
\end{equation}
\end{lemma}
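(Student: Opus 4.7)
The plan is to verify both inequalities directly from the definitions; neither requires any machinery beyond unfolding what $T_j$ means and what the dual norm on $\mathcal{D}^{-1,q}(B)$ is. So I would split the proof into two short, independent steps, one for each mapping property, and mention explicitly that the only delicate bookkeeping is the convention that $\mathcal{D}^{-1,q}(B)$ denotes the topological dual of $\mathcal{D}^{1,q'}_0(B)$, with $q'$ the Hölder conjugate of $q$.

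For \eqref{C1}, the argument is essentially tautological. If $u\in W^{1,q}(B)$, then by the very definition of the Sobolev space its distributional partial derivative $T_j(u)=u_{x_j}$ is represented by an $L^q(B)$ function, and
\[
\|T_j(u)\|_{L^q(B)}=\|u_{x_j}\|_{L^q(B)}\le \|u\|_{L^q(B)}+\|\nabla u\|_{L^q(B)}=\|u\|_{W^{1,q}(B)},
\]
which is exactly \eqref{C1}. No estimate beyond the definition is involved.

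For \eqref{C2}, I would test $T_j(u)$ against an arbitrary $\varphi\in C^\infty_0(B)$ with $\|\varphi\|_{\mathcal{D}^{1,q'}_0(B)}=\|\nabla \varphi\|_{L^{q'}(B)}\le 1$. By the definition of $T_j$ as the distributional $j$-th derivative, followed by Hölder's inequality with exponents $q$ and $q'$,
\[
|\langle T_j(u),\varphi\rangle|=\left|\int_B u\,\varphi_{x_j}\,dx\right|\le \|u\|_{L^q(B)}\,\|\varphi_{x_j}\|_{L^{q'}(B)}\le \|u\|_{L^q(B)}\,\|\nabla \varphi\|_{L^{q'}(B)}\le \|u\|_{L^q(B)}.
\]
Taking the supremum over such $\varphi$ and using the definition of the dual norm on $\mathcal{D}^{-1,q}(B)$ recalled in Section \ref{sec:2} yields $\|T_j(u)\|_{\mathcal{D}^{-1,q}(B)}\le \|u\|_{L^q(B)}$, which is \eqref{C2}.

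There is no real obstacle here: both inequalities are essentially consequences of the definitions, together with one application of Hölder's inequality. The only point worth being careful about is the duality convention (so that the $q'$ appearing in the test functions matches the Hölder dual of the integrability $q$ of $u$); once this is settled, the rest is immediate.
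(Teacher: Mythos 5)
Your proof is correct: both estimates follow exactly as you say, and your reading of the duality convention ($\mathcal{D}^{-1,q}(B)$ as the dual of $\mathcal{D}^{1,q'}_0(B)$ with the dual norm tested on $C^\infty_0(B)$ functions) matches the one fixed in Section \ref{sec:2}. The paper omits the proof as "straightforward", and your two-step verification (pointwise bound $|u_{x_j}|\le|\nabla u|$ for \eqref{C1}, and the definition of $T_j$ plus one application of H\"older's inequality for \eqref{C2}) is precisely the intended argument.
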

We can then prove the following result.
\begin{lemma}
\label{lm:3}
Let $0<\beta<1$ and $1<q<\infty$. Then the restriction of $T_j$ to $W^{\beta,q}(B)$ is a linear continuous operator from $W^{\beta,q}(B)$ to the interpolation space
\[
\mathcal{Y}(B):=\left(\mathcal{D}^{-1,q}(B),L^q(B)\right)_{\beta,q}.
\]
In other words, there exist a constant $C=C(N,\beta,q)>0$ such that
\[
\int_0^{+\infty} t^{-\beta\,q}\,K(t,T_j(u),\mathcal{D}^{-1,q}(B),L^q(B))^q\,\frac{dt}{t}\le C\, \|u\|_{W^{\beta,q}(B)}^q,\quad \mbox{ for every } u\in W^{\beta,q}(B).
\]
\end{lemma}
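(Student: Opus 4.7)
The plan is to recognize this as a textbook instance of the interpolation theorem for linear operators applied to the $K$-method, leveraging the two continuity statements \eqref{C1} and \eqref{C2} already established. Concretely, $T_j$ is defined on $L^q(B) + W^{1,q}(B)$ and takes values in $\mathcal{D}^{-1,q}(B) + L^q(B)$, so the strategy is to bound the $K$-functional of $T_j(u)$ with respect to the target pair by the $K$-functional of $u$ with respect to the source pair.

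First, I would fix $u \in W^{\beta,q}(B)$ and $t>0$, and consider an arbitrary admissible decomposition $u = u_1 + u_2$ with $u_1 \in L^q(B)$ and $u_2 \in W^{1,q}(B)$. By linearity, $T_j(u) = T_j(u_1) + T_j(u_2)$; by \eqref{C2} this writes $T_j(u)$ as the sum of an element of $\mathcal{D}^{-1,q}(B)$ of norm at most $\|u_1\|_{L^q(B)}$, and by \eqref{C1} of an element of $L^q(B)$ of norm at most $\|u_2\|_{W^{1,q}(B)}$. Hence
\[
K(t,T_j(u),\mathcal{D}^{-1,q}(B),L^q(B)) \le \|u_1\|_{L^q(B)} + t\,\|u_2\|_{W^{1,q}(B)}.
\]
Taking the infimum over all such decompositions on the right-hand side yields
\[
K(t,T_j(u),\mathcal{D}^{-1,q}(B),L^q(B)) \le K(t,u,L^q(B),W^{1,q}(B)),
\]
pointwise in $t>0$.

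Raising this inequality to the $q$-th power, multiplying by $t^{-\beta q}$, and integrating against $dt/t$ on $(0,+\infty)$ gives directly
\[
\|T_j(u)\|_{\mathcal{Y}(B)}^q \le \|u\|_{(L^q(B),W^{1,q}(B))_{\beta,q}}^q.
\]
Finally, I would invoke Lemma \ref{lm:1} to identify $(L^q(B),W^{1,q}(B))_{\beta,q}$ with $W^{\beta,q}(B)$ up to an equivalent norm, producing a constant $C=C(N,\beta,q)>0$ with $\|u\|_{(L^q(B),W^{1,q}(B))_{\beta,q}} \le C\,\|u\|_{W^{\beta,q}(B)}$, and combine with the previous display to obtain the claimed estimate.

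I don't anticipate a real obstacle here: the argument is essentially the statement that the $K$-functional construction is an interpolation functor, and the only subtlety is the bookkeeping of which space plays which role (note that Lemma \ref{lm:1} uses the pair $(L^q,W^{1,q})$ in this order, while the target pair is written as $(\mathcal{D}^{-1,q},L^q)$, which is the natural order since \eqref{C1} gains one derivative, mirroring the embedding $W^{1,q}\hookrightarrow L^q$, and \eqref{C2} loses one, mirroring $L^q \hookrightarrow \mathcal{D}^{-1,q}$). The pointwise comparison of $K$-functionals handles both the scalar weight $t$ and the integration uniformly, so no further estimates on the $K$-functional itself are needed.
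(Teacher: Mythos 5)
Your argument is correct and coincides with the paper's own proof: the same decomposition $u=u_1+u_2$, the same use of \eqref{C1} and \eqref{C2} to get the pointwise comparison $K(t,T_j(u),\mathcal{D}^{-1,q}(B),L^q(B))\le K(t,u,L^q(B),W^{1,q}(B))$, followed by integration against $t^{-\beta q}\,\frac{dt}{t}$ and the identification of $(L^q(B),W^{1,q}(B))_{\beta,q}$ with $W^{\beta,q}(B)$ from Lemma \ref{lm:1}. No gaps to report.
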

\begin{proof}
Let $u\in W^{\beta,q}(B)$, by Lemma \ref{lm:1} there exists $u_1\in L^q(B)$ and $u_2\in W^{1,q}(B)$ such that
\[
u=u_1+u_2 \qquad \mbox{ so that by linearity }\qquad T_j(u)=T_j(u_1)+T_j(u_2).
\]
From the definition of the functional $K$, we have 
\[
\begin{split}
t^{-\beta}\, K(t,T_j(u),\mathcal{D}^{-1,q}(B),L^q(B))&\le t^{-\beta}\,\Big(\|T_j(u_1)\|_{\mathcal{D}^{-1,q}(B)}+t\,\|T_j(u_2)\|_{L^q(B)}\Big)\\
&\le t^{-\beta}\,\Big(\|u_1\|_{L^q(B)}+t\,\|u_2\|_{W^{1,q}(B)}\Big)\\
\end{split}
\]
where we used \eqref{C1} and \eqref{C2}. By taking the infimum over the admissible pairs $(u_1,u_2)$, we thus get
\[
t^{-\beta}\, K(t,T_j(u),\mathcal{D}^{-1,q}(B),L^q(B))\le t^{-\beta}\, K(t,u,L^q(B),W^{1,q}(B)).
\]
By Lemma \ref{lm:1} again, the right-hand side belongs to $L^q((0,+\infty);1/t)$. Thus, the same property is true for the left-hand side and this concludes the proof.
\end{proof}
\subsection{Computation of an interpolation space}
We now compute the interpolation space occurring in Lemma \ref{lm:3}. In what follows we denote by $X^*$ the topological dual of a Banach space $X$.
\begin{lemma}[Interpolation of dual spaces]
\label{lm:4}
Let $0<\beta<1$ and $1<q<\infty$, then we have the following chain of identities
\[
\begin{split}
\left(\mathcal{D}^{-1,q}(B),L^{q}(B)\right)_{\beta,q}&=\left(\left(\mathcal{D}^{1,q'}_0(B),L^{q'}(B)\right)_{\beta,q'}\right)^*\\
&=\left(\left(L^{q'}(B),\mathcal{D}^{1,q'}_0(B)\right)_{1-\beta,q'}\right)^*\\
&=\left(\mathcal{D}^{1-\beta,q'}_0(B)\right)^*\\
&=\mathcal{D}^{\beta-1,q}(B),
\end{split}
\]
as Banach spaces.
\end{lemma}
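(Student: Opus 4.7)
The plan is to verify the chain of four equalities in order, leveraging Lemma \ref{lm:1} together with the standard machinery of real interpolation.

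\textbf{First equality (duality of interpolation).} I will appeal to the duality theorem for the $K$-method: if $(A_0, A_1)$ is a compatible couple of reflexive Banach spaces such that $A_0 \cap A_1$ is dense in both $A_0$ and $A_1$, then for $0<\theta<1$ and $1\le r<\infty$,
\[
\bigl((A_0,A_1)_{\theta,r}\bigr)^{*} = (A_0^{*}, A_1^{*})_{\theta,r'},
\]
with equivalent norms; see \cite[Theorem 3.7.1]{BL}. Apply this with $A_0 = \mathcal{D}^{1,q'}_0(B)$, $A_1 = L^{q'}(B)$, $\theta = \beta$, and $r = q'$. Both spaces are reflexive since $1<q'<\infty$, and $C^{\infty}_0(B) \subset A_0 \cap A_1$ is dense in both: in $A_0$ by the very definition of the completion, and in $A_1$ by standard density. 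Since $A_0^{*} = \mathcal{D}^{-1,q}(B)$ by the notational convention and $A_1^{*} = L^q(B)$, the first equality follows.

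\textbf{Second equality (symmetry).} This is the standard symmetry of the interpolation functor, $(X,Y)_{\beta,r} = (Y,X)_{1-\beta,r}$ with equivalent norms, which is a direct consequence of the identity $K(t, u, X, Y) = t\,K(1/t, u, Y, X)$ and the change of variables $s=1/t$ in the defining integral of Definition \ref{defi:inter}.

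\textbf{Third equality.} This requires proving the analogue of Lemma \ref{lm:1} for the spaces $\mathcal{D}^{\,\cdot\,}_0$, namely
\[
\mathcal{D}^{1-\beta, q'}_0(B) = \bigl(L^{q'}(B),\, \mathcal{D}^{1,q'}_0(B)\bigr)_{1-\beta,\, q'}.
\]
Since $C^{\infty}_0(B)$ is a common dense subspace of both sides, it suffices to show that the two norms are equivalent on $C^{\infty}_0(B)$. For $\varphi \in C^{\infty}_0(B)$, Poincar\'e's inequality (Remark \ref{oss:poincare}) makes the homogeneous seminorm $[\varphi]_{W^{1-\beta,q'}(B)}$ comparable to the full norm $\|\varphi\|_{W^{1-\beta,q'}(B)}$, which by Lemma \ref{lm:1} is equivalent to $\|\varphi\|_{(L^{q'}(B),\, W^{1,q'}(B))_{1-\beta,q'}}$; the same Poincar\'e argument shows that $W^{1,q'}(B)$ can be replaced by $\mathcal{D}^{1,q'}_0(B)$ inside the interpolation bracket when one restricts to functions in $C^{\infty}_0(B)$.

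\textbf{Fourth equality.} This is just the notational definition of $\mathcal{D}^{\beta-1,q}(B)$ as the topological dual of $\mathcal{D}^{1-\beta,q'}_0(B)$, introduced at the end of Section \ref{sec:2}.

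The main obstacle is the third step, where one must be careful that interpolation ``preserves'' the zero boundary conditions so that $W^{1,q'}(B)$ may be replaced by its closed subspace $\mathcal{D}^{1,q'}_0(B)$ (and $W^{1-\beta,q'}(B)$ by $\mathcal{D}^{1-\beta,q'}_0(B)$) without affecting the norm up to multiplicative constants. Working uniformly with test functions in $C^{\infty}_0(B)$, where Poincar\'e's inequality and density arguments apply cleanly, circumvents this technicality.
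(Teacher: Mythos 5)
Your handling of the first, second and fourth identities coincides with the paper's (the Duality Theorem \cite[Theorem 3.7.1]{BL}, the symmetry property of the $K$-functor, and the definition of $\mathcal{D}^{\beta-1,q}(B)$ as a dual), and those steps are fine. The gap is in the third identity, which is where essentially all of the work in the paper's proof lies. You reduce it to the claim that on $C^\infty_0(B)$ the norm of $\left(L^{q'}(B),\mathcal{D}^{1,q'}_0(B)\right)_{1-\beta,q'}$ is equivalent to that of $\left(L^{q'}(B),W^{1,q'}(B)\right)_{1-\beta,q'}$, justified by ``the same Poincar\'e argument''. This does not work: the $K$-functional of the couple $\left(L^{q'}(B),W^{1,q'}(B)\right)$ is an infimum over decompositions $u=u_1+u_2$ in which $u_2\in W^{1,q'}(B)$ need not vanish on $\partial B$, even when $u\in C^\infty_0(B)$, so Poincar\'e's inequality cannot be applied to $u_2$ and yields no comparison between the two $K$-functionals. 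Interpolation does not in general commute with passing to a closed subspace encoding zero boundary values; the classical instance is $\left(L^2(B),H^1_0(B)\right)_{1/2,2}=H^{1/2}_{00}(B)\subsetneq H^{1/2}(B)$, occurring exactly at the critical value $(1-\beta)\,q'=1$, which your statement would also have to cover.

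Relatedly, your identification of the $\mathcal{D}^{1-\beta,q'}_0(B)$-norm with the $W^{1-\beta,q'}(B)$-norm on $C^\infty_0(B)$ is itself unjustified: the former is the Gagliardo seminorm over $\mathbb{R}^N$ of the extension by zero, which contains the interaction term $\int_B\int_{\mathbb{R}^N\setminus B}|u(x)|^{q'}\,|x-y|^{-N-(1-\beta)q'}\,dy\,dx$; bounding it by $[u]_{W^{1-\beta,q'}(B)}$ requires a fractional Hardy-type inequality, not Poincar\'e, and it fails at $(1-\beta)\,q'=1$. The paper avoids all of this by proving the third identity directly at the level of the $K$-functional: for one embedding it extends a near-optimal decomposition by zero and integrates the difference quotients over $h\in\mathbb{R}^N$; for the converse it exhibits explicit competitors $u_t=v_t\ast\psi_t$, where $v_t(x)=u\left(\frac{R}{R-t}\,x\right)$ shrinks the support into $B_{R-t}$ before mollifying at scale $t$ (so that $u_t\in\mathcal{D}^{1,q'}_0(B)$ for every $t$), and then estimates $\int_0^{R/2}t^{(\beta-1)q'}\,\|u-u_t\|_{L^{q'}(B)}^{q'}\,\frac{dt}{t}$ and $\int_0^{R/2}t^{(\beta-1)q'}\,t^{q'}\,\|u_t\|_{\mathcal{D}^{1,q'}_0(B)}^{q'}\,\frac{dt}{t}$ by the Gagliardo seminorm of $u$. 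Some construction of this kind, producing decompositions that respect the zero boundary condition at every scale $t$, is indispensable, and your proposal as written omits it.
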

\begin{proof}
The first identity is a consequence of the so-called {\it Duality Theorem} in real interpolation theory, see \cite[Theorem 3.7.1]{BL}. This result requires the space
\[
L^{q'}(B)\cap \mathcal{D}^{1,q'}_0(B)=\mathcal{D}^{1,q'}_0(B),
\]
to be dense both in $L^{q'}(B)$ and $\mathcal{D}^{1,q'}_0(B)$, an hypothesis which is of course verified.  
\par
The second identity is a basic fact in real interpolation, see \cite[Theorem 3.4.1]{BL}. On the other hand, the fourth identity is just the definition of dual space. 
\par
In order to conclude, we need to show the third identity, i.\,e.
\[
\left(L^{q'}(B),\mathcal{D}^{1,q'}_0(B)\right)_{1-\beta,q'}=\mathcal{D}^{1-\beta,q'}_0(B).
\]
Let us set for notational simplicity
\[
\mathcal{X}(B):=\left(L^{q'}(B),\mathcal{D}^{1,q'}_0(B)\right)_{1-\beta,q'}.
\]
For every $u\in L^{q'}(B)+\mathcal{D}^{1,q'}_0(B)$ and $t>0$, we also use the simplified notation
\[
K(t,u)=\inf_{u_1\in L^{q'}(B),\, u_2\in \mathcal{D}^{1,q'}_0(B)}\Big\{\|u_1\|_{L^{q'}(B)}+t\,\|u_2\|_{\mathcal{D}^{1,q'}_0(B)}\,: \, u=u_1+u_2\Big\}.
\]
For every $u\in\mathcal{X}\setminus\{0\}$ and $h\in\mathbb{R}^N\setminus\{0\}$, there exist $u_1\in L^{q'}(B)$ and $u_2\in \mathcal{D}^{1,q'}_0(B)$ such that
\begin{equation}
\label{dai}
u=u_1+u_2\qquad \mbox{ and }\qquad \|u_1\|_{L^{q'}(B)}+|h|\,\|u_2\|_{\mathcal{D}^{1,q'}_0(B)}\le 2\,K(|h|,u).
\end{equation}
Both $u_1$ and $u_2$ are extended to $\mathbb{R}^N\setminus \overline B$ by $0$.
Thus for $h\not=0$ we get\footnote{In the second inequality, we use the classical fact
\[
\int_{\mathbb{R}^N} \frac{|u_2(x+h)-u_2(x)|^{q'}}{|h|^{q'}}\,dx\le C\,\int_{\mathbb{R}^N} |\nabla u_2|^{q'}\,dx.
\]}
\[
\begin{split}
\int_{\mathbb{R}^N} \frac{|u(x+h)-u(x)|^{q'}}{|h|^{N+(1-\beta)\,q'}}\,dx&\le C\,\int_{\mathbb{R}^N} \frac{|u_1(x+h)-u_1(x)|^{q'}}{|h|^{N+(1-\beta)\,q'}}\,dx\\
&+C\,\int_{\mathbb{R}^N} \frac{|u_2(x+h)-u_2(x)|^{q'}}{|h|^{N+(1-\beta)\,q'}}\,dx\\
&\le C\,|h|^{-N-(1-\beta)\,q'}\,\|u_1\|_{L^{q'}(B)}^{q'}\\
&+C\,|h|^{-N+\beta\,q'}\,\|\nabla u_2\|_{L^{q'}(B)}^{q'}\\
&\le C\, |h|^{-N-(1-\beta)\,q'}\,\Big(\|u_1\|_{L^{q'}(B)}+|h|\,\|u_2\|_{\mathcal{D}^{1,q'}_0(B)}\Big)^{q'}.
\end{split}
\]
By using \eqref{dai}, we obtain
\[
\int_{\mathbb{R}^N} \frac{|u(x+h)-u(x)|^{q'}}{|h|^{N+(1-\beta)\,q'}}\,dx\le C\, |h|^{-N-(1-\beta)\,q'}\,K(|h|,u)^{q'}.
\]
We now integrate in $h$ over $\mathbb{R}^N$. This yields
\[
\begin{split}
\int_{\mathbb{R}^N}\int_{\mathbb{R}^N} \frac{|u(x+h)-u(x)|^{q'}}{|h|^{N+(1-\beta)\,q'}}\,dx\,dh&\le C\, \int_{\mathbb{R}^N} |h|^{-N-(1-\beta)\,q'}\,K(|h|,u)^{q'}\,dh\\
&=C\,N\,\omega_N\, \int_0^{+\infty} t^{(\beta-1)\,q'}\,K(t,u)^{q'}\,\frac{dt}{t}=C\, \|u\|^{q'}_{\mathcal{X}(B)},
\end{split}
\]
which shows that we have the continuous embedding
\[
\mathcal{X}(B)\hookrightarrow \mathcal{D}^{1-\beta,q'}_0(B).
\]
\vskip.2cm
We now need to show the converse embedding. Let us indicate by $R>0$ the radius of $B$, we further assume for simplicity that $B$ is centered at the origin. Let $u\in \mathcal{D}^{1-\beta,q'}_0(B)$, then in particular $u\in L^{q'}(B)$ (see Remark \ref{oss:poincare}) and thus from the writing
\[
u=u+0,
\]
by definition of $K$ we obtain
\begin{equation}
\label{primo}
\begin{split}
\int_{R/2}^{+\infty} t^{(\beta-1)\,q'}\, K(t,u)^{q'}\,\frac{dt}{t}&\le \left(\int_{R/2}^{+\infty} t^{(\beta-1)\,q'-1}\,dt\right)\, \|u\|_{L^{q'}(B)}^{q'}\\
&=\frac{R^{(\beta-1)\,q'}}{2^{(\beta-1)\,q'}\,(1-\beta)\,q'}\, \|u\|_{L^{q'}(B)}^{q'}\le C\, \|u\|_{\mathcal{D}_0^{1-\beta,q'}(B)}^{q'},
\end{split}
\end{equation}
for some $C=C(N,\beta,q)>0$. In the last passage we used Poincar\'e inequality for $\mathcal{D}_0^{1-\beta,q'}(B)$.
\par  
We need to deal with $0<t<R/2$. For this, we take a positive function $\psi\in C^{\infty}_0$ with support contained in $\{x\in\mathbb{R}^N\, :\, |x|<1\}$ and such that $\int_{\mathbb{R}^N} \psi\,dx=1$. Then we define
\[
\psi_t(x)=\frac{1}{t^N}\,\psi\left(\frac{x}{t}\right),\qquad x\in\mathbb{R}^N,\ t>0,
\]
and
\[
v_t(x)=u\left(\frac{R}{R-t}\,x\right),\qquad x\in\mathbb{R}^N,\ 0<t<\frac{R}{2},
\]
where we intend that $u$ is extended by $0$ to the whole $\mathbb{R}^N$. In particular, $v_t$ vanishes outside $B_{R-t}$.
Finally, for $0<t<R/2$ we set
\[
u_t(x)=v_t\ast \psi_t(x)=\int_{B_{R-t}} u\left(\frac{R}{R-t}\,y\right)\,\frac{1}{t^N}\,\psi\left(\frac{x-y}{t}\right)\,dy.
\]
By construction, for every $0<t<R/2$ the function $v_t$ is such that
\[
\int_{\mathbb{R}^N} |\nabla u_t|^{q'}\,dx<+\infty\qquad \mbox{ and }\qquad u_t\equiv 0\ \mbox{ in }\mathbb{R}^N\setminus B,
\]
thus $u_t\in \mathcal{D}^{1,q'}_0(B)$.
We observe that by Jensen inequality
\[
\|u-u_t\|_{L^{q'}(B)}^{q'}\le \int_{B}\int_{B_{R-t}} \left|u(x)-u\left(\frac{R}{R-t}\,y\right)\right|^{q'}\,\frac{1}{t^N}\,\psi\left(\frac{x-y}{t}\right)\,dy\,dx.
\]
Thus by using a change of variable and Fubini Theorem we get
\[
\begin{split}
\int_0^{R/2} &t^{(\beta-1)\,q'}\,\|u-u_t\|_{L^{q'}(B)}^{q'}\,\frac{dt}{t}\\
&\le \int_0^{R/2} \int_{B}\int_{B_{R-t}} t^{(\beta-1)\,q'}\,\left|u(x)-u\left(\frac{R}{R-t}\,y\right)\right|^{q'}\,\frac{1}{t^N}\,\psi\left(\frac{x-y}{t}\right)\,dy\,dx\,\frac{dt}{t}\\
&=\int_0^{R/2} \int_{B}\int_{B} \left(\frac{R-t}{R}\right)^N\,t^{(\beta-1)\,q'}\,\left|u(x)-u(z)\right|^{q'}\,\frac{1}{t^N}\,\psi\left(\frac{x}{t}-\frac{R-t}{R\,t}z\right)\,dz\,dx\,\frac{dt}{t}\\
&\le \int_{B}\int_{B}\left|u(x)-u(z)\right|^{q'}\left(\int_0^{R/2} \,t^{(\beta-1)\,q'-N}\,\psi\left(\frac{x-z}{t}+\frac{z}{R}\right)\,\frac{dt}{t}\right)\,dz\,dx.
\end{split}
\]
We now observe that 
\[
\left|\frac{x-z}{t}+\frac{z}{R}\right|>1\qquad \Longrightarrow \qquad \psi\left(\frac{x-z}{t}+\frac{z}{R}\right)=0,
\]
thus in particular
\[
\left|\frac{x-z}{t}\right|>1+\left|\frac{z}{R}\right|\qquad \Longrightarrow \qquad \psi\left(\frac{x-z}{t}+\frac{z}{R}\right)=0.
\]
Finally, for $x,z\in B$ we get
\[
\begin{split}
\int_0^{R/2} \,t^{(\beta-1)\,q'-N}\,\psi\left(\frac{x-z}{t}+\frac{z}{R}\right)\,\frac{dt}{t}&\le \int_0^{+\infty} \,t^{(\beta-1)\,q'-N}\,\psi\left(\frac{x-z}{t}+\frac{z}{R}\right)\,\frac{dt}{t}\\
&=\int_{\frac{|x-z|}{1+\frac{|z|}{R}}}^{+\infty} \,t^{(\beta-1)\,q'-N}\,\psi\left(\frac{x-z}{t}+\frac{z}{R}\right)\,\frac{dt}{t}\\
&\le \int_{\frac{|x-z|}{2}}^{+\infty} \,t^{(\beta-1)\,q'-N}\,\psi\left(\frac{x-z}{t}+\frac{z}{R}\right)\,\frac{dt}{t}\\
&\le C\,\|\psi\|_{L^\infty(\mathbb{R}^N)}\,|x-z|^{-N-(1-\beta)\,q'}.
\end{split}
\]
Thus, we obtain
\begin{equation}
\label{secondo}
\begin{split}
\int_0^{R/2} t^{(\beta-1)\,q'}\,\|u-u_t\|_{L^{q'}(B)}^{q'}\,\frac{dt}{t}&\le C\,\|\psi\|_{L^\infty(\mathbb{R}^N)}\,\int_B \int_B \frac{|u(x)-u(z)|^{q'}}{|x-z|^{N+(1-\beta)\,q'}}\,dx\,dz\\
&\le C'\, \|u\|^{q'}_{\mathcal{D}^{1-\beta,q'}_0(B)}.
\end{split}
\end{equation}
In order to finish the proof, we just need to show that 
\begin{equation}
\label{terzo}
\int_0^{R/2} t^{(\beta-1)\,q'}\,t^{q'}\,\|u_t\|_{\mathcal{D}^{1,q'}_0(B)}^{q'}\,\frac{dt}{t}\le C\, \|u\|^{q'}_{\mathcal{D}^{1-\beta,q'}_0(B)}.
\end{equation}
We first observe that (recall that $u\equiv 0$ outside $B$)
\[
\nabla u_t(x)=\int_{B_{R-t}} u\left(\frac{R}{R-t}\,y\right)\,\frac{1}{t^{N+1}}\,\nabla \psi\left(\frac{x-y}{t}\right)\,dy=\int_{\mathbb{R}^N} u\left(\frac{R}{R-t}\,y\right)\,\frac{1}{t^{N+1}}\,\nabla \psi\left(\frac{x-y}{t}\right)\,dy,
\]
and by the Divergence Theorem
\[
\int_{\mathbb{R}^N}\frac{1}{t^{N+1}}\,\nabla \psi\left(\frac{x-y}{t}\right)\,dy=0.
\]
Thus we obtain as well
\[
-\nabla u_t(x)=\int_{\mathbb{R}^N} \left[u\left(\frac{R}{R-t}\,x\right)-u\left(\frac{R}{R-t}\,y\right)\right]\,\frac{1}{t^{N+1}}\,\nabla \psi\left(\frac{x-y}{t}\right)\,dy,
\]
and by H\"older inequality
\[
\begin{split}
\|u_t\|_{\mathcal{D}^{1,q'}_0(B)}^{q'}&=\int_{\mathbb{R}^N} |\nabla u_t|^{q'}\,dx\\
& \le \int_{\mathbb{R}^N} \left(\int_{\mathbb{R}^N} \left|u\left(\frac{R}{R-t}\,x\right)-u\left(\frac{R}{R-t}\,y\right)\right|^{q'}\,\frac{1}{t^{N+1}}\,\left|\nabla \psi\left(\frac{x-y}{t}\right)\right|\,dy\right)\\
&\times\left(\int_{\mathbb{R}^N}\frac{1}{t^{N+1}}\,\left|\nabla \psi\left(\frac{x-y}{t}\right)\right|\,dy\right)^{q'-1}\,dx\\
&= \frac{\|\nabla \psi\|^{q'-1}_{L^1(\mathbb{R})}}{t^{q'-1}}\,\int_{\mathbb{R}^N} \int_{\mathbb{R}^N} \left|u\left(\frac{R}{R-t}\,x\right)-u\left(\frac{R}{R-t}\,y\right)\right|^{q'}\,\,\frac{1}{t^{N+1}}\,\left|\nabla \psi\left(\frac{x-y}{t}\right)\right|\,dy\,dx\\
&\le \frac{\|\nabla \psi\|^{q'-1}_{L^1(\mathbb{R})}}{t^{q'-1}}\,\int_{\mathbb{R}^N} \int_{\mathbb{R}^N} \left|u\left(z\right)-u\left(w\right)\right|^{q'}\,\,\frac{1}{t^{N+1}}\,\left|\nabla \psi\left(\frac{R-t}{R\,t}\,(z-w)\right)\right|\,dz\,dw.
\end{split}
\]
This yields
\begin{equation}
\label{quasi6}
\begin{split}
\int_0^{R/2} &t^{(\beta-1)\,q'}\,t^{q'}\,\|u_t\|_{\mathcal{D}^{1,q'}_0(B)}^{q'}\,\frac{dt}{t}\\
&\le C\,\int_0^{R/2} t^{(\beta-1)\,q'}\,\int_{\mathbb{R}^N} \int_{\mathbb{R}^N} \left|u\left(z\right)-u\left(w\right)\right|^{q'}\,\,\frac{1}{t^{N+1}}\,\left|\nabla \psi\left(\frac{R-t}{R\,t}\,(z-w)\right)\right|\,dz\,dw\,dt\\
&=C\,\int_{\mathbb{R}^N} \int_{\mathbb{R}^N} |u(z)-u(w)|^{q'}\,\left(\int_0^{R/2} t^{(\beta-1)\,q'}\,\frac{1}{t^{N}}\,\left|\nabla \psi\left(\frac{R-t}{R\,t}\,(z-w)\right)\right|\,\frac{dt}{t}\right)\,dz\,dw\\
&\le C\,\int_{\mathbb{R}^N} \int_{\mathbb{R}^N} |u(z)-u(w)|^{q'}\,\left(\int_0^{+
\infty} t^{(\beta-1)\,q'}\,\frac{1}{t^{N}}\,\left|\nabla \psi\left(\frac{R-t}{R\,t}\,(z-w)\right)\right|\,\frac{dt}{t}\right)\,dz\,dw.
\end{split}
\end{equation}
We now observe that 
\[
\frac{R-t}{R}\,\frac{|z-w|}{t}>1\qquad \Longrightarrow\qquad \nabla \psi\left(\frac{R-t}{R\,t}\,(z-w)\right)=0,
\]
thus in particular for $0<t<R/2$ we have
\[
\frac{1}{2}\,\frac{|z-w|}{t}>1\qquad \Longrightarrow\qquad \nabla \psi\left(\frac{R-t}{R\,t}\,(z-w)\right)=0.
\]
This implies that for $z,w\in\mathbb{R}^N$ we have
\[
\begin{split}
\int_0^{\infty} t^{(\beta-1)\,q'}\,\frac{1}{t^{N}}\,\left|\nabla \psi\left(\frac{R-t}{R\,t}\,(z-w)\right)\right|\,\frac{dt}{t}&=\int_{\frac{|z-w|}{2}}^{+\infty} t^{(\beta-1)\,q'}\,\frac{1}{t^{N}}\,\left|\nabla \psi\left(\frac{R-t}{R\,t}\,(z-w)\right)\right|\,\frac{dt}{t}\\
&\le C\,\|\nabla \psi\|_{L^\infty(\mathbb{R}^N)}\,|z-w|^{-N-(1-\beta)\,q'}.
\end{split}
\]
Then from \eqref{quasi6} we obtain \eqref{terzo}. 
We can now conclude the proof of the embedding
\[
\mathcal{D}^{1-\beta,q'}_0(B)\hookrightarrow \mathcal{X}(B).
\]
From \eqref{primo}, \eqref{secondo} and \eqref{terzo} we get
\[
\begin{split}
\int_{0}^{+\infty} t^{(\beta-1)\,q'}\, K(t,u)^{q'}\,\frac{dt}{t}&=\int_0^{R/2} t^{(\beta-1)\,q'}\, K(t,u)^{q'}\,\frac{dt}{t}+\int_{R/2}^{+\infty} t^{(\beta-1)\,q'}\, K(t,u)^{q'}\,\frac{dt}{t}\\
&\le C\,\int_0^{R/2} t^{(\beta-1)\,q'}\, \|u-u_t\|^{q'}\,\frac{dt}{t}\\
&+C\,\int_0^{R/2} t^{(\beta-1)\,q'}\,t^{q'}\,\|u_t\|_{\mathcal{D}^{1,q'}_0(B)}^{q'}\,\frac{dt}{t}\\
&+\int_{R/2}^{+\infty} t^{(\beta-1)\,q'}\, K(t,u)^{q'}\,\frac{dt}{t} \le C\,\|u\|^{q'}_{\mathcal{D}^{1-\beta,q'}_0(B)},
\end{split}
\]
for some $C=C(N,\beta,q)>0$. This concludes the proof.
\end{proof}

\end{document}